\newtheorem{thm}{Theorem}[section]
\newtheorem*{thma}{Theorem 1.5}
\newtheorem{lem}[thm]{Lemma}
\theoremstyle{definition}
\newtheorem{defn}[thm]{Definition}
\theoremstyle{remark}
\newtheorem{rem}[thm]{Remark}
\numberwithin{equation}{section}
\renewcommand{\l}{\lambda}
\newcommand{\norm}[1]{\left\Vert#1\right\Vert}
\newcommand{\To}{\longrightarrow}
\newcommand{\lr}[1]{\langle #1 \rangle}
\begin{document}

\title[Bilinear strichartz estimates for Schr\"{o}dinger operators]
{Bilinear Strichartz estimates for Schr\"{o}dinger operators in 2
dimensional compact manifolds with boundary and cubic NLS}%
\author{Jin-Cheng Jiang}
\address{INSTITUTE OF MATHEMATICS, ACADEMIA SINICA, TAIPEI, TAIWAN, 11529, R.O.C.}
\email{jiangjc@math.sinica.edu.tw}

\thanks{The author would like to thank Christopher Sogge for suggesting the problem and
numerous helpful discussions during this study. He would also like to thank Matthew Blair for
many helpful discussions. }
\maketitle

\begin{abstract}
In this paper, we establish bilinear and gradient bilinear Strichartz estimates for
Schr\"{o}dinger operators in 2 dimensional compact manifolds with boundary.
Using these estimates, we can infer the local well-posedness of cubic nonlinear Schr\"{o}dinger
equation in $H^s$ for every $s>\frac{2}{3}$ on such manifolds.
\end{abstract}

\section{Introduction and Results}
Let $(M,g)$ be a Riemannian manifold of dimension $n\geq2$.
Consider the Schr\"{o}dinger equation
\begin{equation}\label{E:Schrodinger}
D_tu+\Delta_gu=0,\;\;\;\;\;\;\;\;u(0,x)=f(x)
\end{equation}
where $\Delta_g$ denotes the Laplace-Beltrami operator on
manifold and $D_t=i^{-1}\partial_t$. Strichartz estimates are a
family of dispersive estimates on solutions $u(t,x):[0,T]\times
M\rightarrow \mathbb{C}$ which state
\begin{equation}\|u\|_{L^p([0,T];L^q(M))}\leq C\|f\|_{H^s(M)}
\end{equation}
 where $H^s$ denotes the $L^2$ Sobolev space over $M$, and
$2\leq p,q\leq\infty$ satisfies
$$\frac{2}{p}+\frac{n}{q}=\frac{n}{2}\;\;\;\;\;\;\;\;(n,p,q)\neq(2,2,\infty).$$

In Euclidean space, one can take $T=\infty$ and $s=0$; see for
example Strichartz~\cite{Str77}, Ginibre and Velo~\cite{GiV84}, Keel
and Tao~\cite{KeT98} and references therein. Such estimates have
been a key tool in the study of nonlinear Schr\"{o}dinger
equations. In the case of compact manifolds $(M,g)$ without boundary Burq,
G\'{e}rard and Tzvetkov~\cite{BGT04} proved the finite time scale
estimates (1.2) for the Schr\"{o}dinger operators with a loss of
derivatives $s=\frac{1}{p}$ in their
estimates when compared to the case of flat geometries.

In the case of compact manifolds with boundary, one considers Dirichlet or
Neumann boundary conditions in addition to (1.1)
$$u(t,x)|_{\partial M}=0\;({\rm Dirichlet}),\;\;\;{\rm or}\;\;\;\;N_x\cdot
\nabla u(t,x)|_{\partial M}=0\;({\rm Neumann})$$
where $N_x$ denotes the unit normal vector field to $\partial M$.
Here one excepts a further loss of derivatives due to Rayleigh
whispering galley modes. Recently, Anton ~\cite{Ant05} showed that
the estimates (1.2) hold on general manifolds with boundary if
$s>\frac{3}{2p}$ which arguments of~\cite{Ant05} work equally well for a manifold
without boundary equipped with a Lipschitz metric. Then Blair,
Smith and Sogge~\cite{BSS07} built
estimates (1.2) with a less loss of derivatives $s=\frac{4}{3p}$ in manifolds with boundary.

Write $u=e^{it\Delta}f$ as the solution of~\eqref{E:Schrodinger} with initial data $f$.
We consider bilinear estimates for the Schr\"{o}dinger operators
in compact manifolds of the form
\begin{equation}\label{E:bilinearStrione}
\|e^{it\Delta}fe^{it\Delta}g\|_{L^2([0,1]\times M)}\leq C({\rm
min}(\Lambda,\Gamma))^{s_0}\|f\|_{L^2(M)}\|g\|_{L^2(M)}
\end{equation}
where $\Lambda,\Gamma$ are large dyadic numbers, and $f,g$ are supposed to be spectrally localized on dyadic intervals
of order $\Lambda,\Gamma$ respectively, namely
$$\mathbb{I}_{\Lambda\leq\sqrt{-\Delta}\leq2\Lambda}(f)=f\;\;\;,\;\;\;\mathbb{I}_{\Gamma\leq\sqrt{-\Delta}
\leq2\Gamma}(g)=g.$$
Here $\mathbb{I}_{\Lambda\leq\sqrt{-\Delta}\leq2\Lambda}$ denotes the spectral projection operator $$\sum_{\Lambda\leq\Lambda_j\leq2\Lambda}E_jf=\sum_{\Lambda\leq\Lambda_j\leq2\Lambda}e_j\int_Mfe_j\;,$$while $\{\Lambda_j^2\}$ and $\{e_j\}$ are eigenvalues and corresponding eigenfunctions of $-\Delta_g$.
Such kind of estimates were established and used on
Schr\"{o}dinger equation on manifolds with flat metric; see
Klainerman-Machedon-Bourgain-Tataru~\cite{Kla96}, Bourgain~\cite{Bo93}
and Tao~\cite{Tao01} and reference therein .  Then Burq, G\'{e}rard and Tzvetkov
~\cite{BGT05} established the bilinear estimates in sphere and Zoll
surfaces with $s_0>\frac{1}{4}$. In the cases of
sphere and Zoll surfaces~\cite{BGT05} ,
due to the good locations of eigenvalues for the Laplacian, the bilinear
Strichartz estimates are reduced to bilinear spectral cluster estimates. For general manifolds,
our poor knowledge of spectrums does not allow us to use the same technique. One of our main results here
is showing that by considering the endpoint of admissible pairs for the Schr\"{o}dinger operator and
using the parametrix construction, we can get the bilinear Strichartz estimates for general 2
dimensional manifolds, though the estimates are not known to be sharp.

Consider Strichartz estimates on manifolds with boundary
obtained by Blair, Smith and Sogge~\cite{BSS07}.
When $n=2$, $(p,q)=(4,4)$ is admissible, so we have
\begin{equation*}
\|e^{it\Delta}f\|_{L^4([0,1]\times M)}\leq C\|f\|_{H^{1/3}(M)}.
\end{equation*}
Using Littlewood-Paley theory,
let $f_{\Lambda}=\mathbb{I}_{\Lambda\leq\sqrt{-\Delta}\leq2\Lambda}(f)$,
this is equivalent to say
$\|e^{it\Delta}f_{\Lambda}\|_{L^4([0,1]\times M)}\leq C\Lambda^{1/3}\|f_{\Lambda}\|_{L^2(M)}$
holds for all dyadic
number $\Lambda$, which
is implied by bilinear estimates~\eqref{E:bilinearStrione} with $s_0=\frac{2}{3}$. However we
would establish the following estimates with $s_0>\frac{2}{3}$.\par

\begin{thm}\label{T:bilinearone}
Let $(M,g)$ be a 2 dimensional compact manifold with boundary. For
any $f,\;g\in L^2(M)$ satisfies
\begin{equation*}
\mathbb{I}_{\Lambda\leq\sqrt{-\Delta}\leq2\Lambda}(f)
=f\;\;\;\;\mathbb{I}_{\Gamma\leq\sqrt{-\Delta}\leq2\Gamma}(g)=g.
\end{equation*}
Then for any $s_0>\frac{2}{3}$, there exists a $C>0$ such that
\begin{equation}\label{E:bilinearStrionemine}
\|e^{it\Delta}fe^{it\Delta}g\|_{L^2([0,1]\times M)}\leq C({\rm
min}(\Lambda,\Gamma))^{s_0}\|f\|_{L^2(M)}\|g\|_{L^2(M)}.
\end{equation}
\end{thm}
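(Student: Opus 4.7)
The plan is to split the two factors asymmetrically, putting $e^{it\Delta}f$ into a space-time norm that is close to the 2D endpoint $L^2_tL^\infty_x$ while absorbing $e^{it\Delta}g$ into $L^\infty_tL^2_x$ via $L^2$-conservation. By symmetry one may assume $\Lambda\le\Gamma$, so that $\min(\Lambda,\Gamma)=\Lambda$; note that the frequency $\Gamma$ of $g$ will then not appear in the final bound.

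First I would apply H\"older's inequality in time and space to obtain
\begin{equation*}
\|e^{it\Delta}f\,e^{it\Delta}g\|_{L^2([0,1]\times M)}\le\|e^{it\Delta}f\|_{L^p_tL^\infty_x}\,\|e^{it\Delta}g\|_{L^q_tL^2_x},\qquad \tfrac{1}{p}+\tfrac{1}{q}=\tfrac{1}{2}.
\end{equation*}
Since $e^{it\Delta}$ is unitary on $L^2(M)$ (under either Dirichlet or Neumann boundary condition) and $t\in[0,1]$, the second factor is bounded by $\|g\|_{L^2}$. For the first factor, take $p=2+\eps$ with $\eps>0$ small, and let $q_\eps$ satisfy the 2D admissibility relation $\tfrac{2}{p}+\tfrac{2}{q_\eps}=1$. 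The near-endpoint Strichartz estimate of Blair--Smith--Sogge gives
\begin{equation*}
\|e^{it\Delta}f\|_{L^p_tL^{q_\eps}_x}\le C\Lambda^{4/(3p)}\|f\|_{L^2},
\end{equation*}
and since $e^{it\Delta}f$ remains spectrally localized in $[\Lambda,2\Lambda]$ for every $t$, a Bernstein-type inequality for the spectral cut-off yields $\|e^{it\Delta}f(t,\cdot)\|_{L^\infty_x}\le C\Lambda^{2/q_\eps}\|e^{it\Delta}f(t,\cdot)\|_{L^{q_\eps}_x}$, hence
\begin{equation*}
\|e^{it\Delta}f\|_{L^p_tL^\infty_x}\le C\Lambda^{4/(3p)+2/q_\eps}\|f\|_{L^2}.
\end{equation*}

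The combined exponent on $\Lambda$ equals
\begin{equation*}
\frac{4}{3p}+\frac{2}{q_\eps}=\frac{4+3\eps}{3(2+\eps)}=\frac{2}{3}+O(\eps),
\end{equation*}
which can be driven below any prescribed $s_0>\tfrac{2}{3}$ by choosing $\eps$ small enough. Multiplying by $\|g\|_{L^2}$ completes the argument.

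The main obstacle is not the H\"older splitting itself but the two analytic inputs on a manifold with boundary: the near-endpoint Strichartz estimate, supplied by Blair--Smith--Sogge via a parametrix construction that respects the boundary reflections, and a Bernstein inequality for the spectral projector $\mathbb{I}_{\Lambda\le\sqrt{-\Delta}\le2\Lambda}$ of the Dirichlet or Neumann Laplacian, which must carry the sharp loss $\Lambda^{2/q_\eps}$ in the 2D regime $q_\eps\to\infty$. Both rely on the kernel information furnished by the parametrix alluded to in the introduction; once they are in hand, the passage $(p,q)\to(2,\infty)$ through the 2D endpoint is what drives the loss on $\min(\Lambda,\Gamma)$ down to the threshold $2/3$.
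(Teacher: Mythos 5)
Your argument is correct and arrives at the same threshold $\frac{2}{3}$, but it takes a genuinely different route from the paper. After the same H\"older splitting (putting the lower-frequency factor into an $L^p_t L^\infty_x$ norm with $p$ near $2$ and absorbing the other into $L^\infty_t L^2_x$ by unitarity), you invoke the Blair--Smith--Sogge Strichartz estimate as a black box together with a Bernstein inequality for the dyadic spectral projector, whereas the paper instead reduces matters to proving an endpoint $L^2_t L^\infty_x$ local smoothing estimate with a $(\log\lambda)^{1/2}$ loss (Theorem~\ref{T:localStri}) and establishes it from scratch via the wave-packet transform and a $TT^\ast$ argument, quoting Blair--Smith--Sogge only for the dispersive kernel bound. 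The trade-offs: your route is shorter and more modular, and it transfers immediately to the boundaryless case (where the Burq--G\'erard--Tzvetkov loss $\Lambda^{1/p}$ gives $1-\frac{1}{p}\to\frac{1}{2}$, matching Remark~1.2); the paper's route is essentially self-contained and also furnishes the stronger endpoint result, which is reused in the gradient estimate of Section~4.

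One ingredient that you invoke without justification and which deserves a careful statement is the Bernstein inequality
\begin{equation*}
\|h\|_{L^\infty(M)}\le C\Lambda^{2/q}\|h\|_{L^q(M)},\qquad \mathbb{I}_{\Lambda\le\sqrt{-\Delta}\le2\Lambda}h=h,
\end{equation*}
on a compact $2$-manifold \emph{with boundary} and with constant uniform in $\Lambda$. This is true, but it is not among the facts you cite, and it is not entirely automatic in the boundary setting. The standard route is to replace the sharp projector by a smooth Littlewood--Paley cutoff $\widetilde\beta(\sqrt{-\Delta}/\Lambda)$ reproducing $h$, and to prove the kernel bound $|K_\Lambda(x,y)|\lesssim_N\Lambda^2(1+\Lambda\,d(x,y))^{-N}$ by writing $\widetilde\beta(\sqrt{-\Delta}/\Lambda)$ via the cosine transform, using finite propagation speed for $\cos(t\sqrt{-\Delta})$, and combining with Sogge's $L^2\to L^\infty$ unit-band spectral cluster bound $\|\chi_\lambda\|_{L^2\to L^\infty}\lesssim\lambda^{(n-1)/2}$, all of which are available for the Dirichlet and Neumann Laplacians. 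This is precisely the kind of finite-propagation argument the paper itself carries out in Section~4 for the gradient estimate, so it is consistent in spirit, but you should state the Bernstein lemma explicitly and indicate this proof, since it is the one place where the boundary enters your argument beyond the citation of Blair--Smith--Sogge.
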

\smallskip

\begin{rem}
Our proof of Theorem~\ref{T:bilinearone}  can be simplified to get the bilinear Strichartz
estimates with $s_0>\frac{1}{2}$ in 2 dimensional compact manifolds without boundary.
\end{rem}

For compact manifold with boundary, Anton~\cite{Ant06}
proved~\eqref{E:bilinearStrione} and the following
\begin{equation}\label{E:bilinearStritwo}
\|(\nabla e^{it\Delta}f)e^{it\Delta}g\|_{L^2([0,1]\times M)}\leq
C\Lambda({\rm min}(\Lambda,\Gamma))^{s_0}\|f\|_{L^2(M)}\|g\|_{L^2(M)}
\end{equation} with $s_0>\frac{1}{2}$ on three dimensional balls with Dirichlet boundary condition
and radial data. She used the same idea as~\cite{BGT05}, thanks again the good locations of
eigenvalues for the Laplacian in such setting.
Using~\eqref{E:bilinearStrione} and~\eqref{E:bilinearStritwo} with $s_0>\frac{1}{2}$ , she proved the local
well-posedness of cubic nonlinear Schr\"{o}dinger equation with Dirichlet boundary condition and
radial data in $H^s$ for every $s>\frac{1}{2}$ on three dimensional balls.
In order to build the corresponding
estimates in our case, we need more results from harmonic analysis besides the parametrix construction of
solutions for the free equation. There are two different cases. If the gradient operator is acting on the
solution has initial data being localized to the larger frequency, then we can exploit the boundedness of Riesz
transform (see~\cite{Shen05}) on $L^2(M)$ ,  then apply the H\"{o}rmander multiple theorem
(for manifold with boundary, see~\cite{Xu04}) to get the desired result. For the other case,
we make use of Xu's~\cite{Xu04} estimates for the gradient spectral cluster operators. Following
by an argument concerning the finite propagation speed of solutions to the wave equation (see for example~\cite{Sog02},~\cite{Xu04} ), then we can control the $L^2$ norm from the estimates of gradient spectral
cluster operators by a $L^{\infty}$ norm, thus return to the parametrix construction argument again.   \par

Our gradient bilinear Strichartz estimate is the following.
\begin{thm}\label{T:bilineartwo}
Let $(M,g)$ be a 2 dimensional compact manifold with boundary. For
any $f,\;g\in L^2(M)$ satisfies
\begin{equation*}
\mathbb{I}_{\Lambda\leq\sqrt{-\Delta}\leq2\Lambda}(f)
=f\;\;\;\;\mathbb{I}_{\Gamma\leq\sqrt{-\Delta}\leq2\Gamma}(g)=g.
\end{equation*}
Then for any $s_0>\frac{2}{3}$, there exists a $C>0$ such that
\begin{equation}\label{E:bilinearStritwomine}
\|(\nabla_x(e^{it\Delta}f))e^{it\Delta}g\|_{L^2([0,1]\times
M)}\leq C\Lambda({\rm
min}(\Lambda,\Gamma))^{s_0}\|f\|_{L^2(M)}\|g\|_{L^2(M)}
\end{equation}
\end{thm}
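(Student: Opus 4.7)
My plan is to split the estimate into two cases according to whether $\Lambda\geq\Gamma$ or $\Lambda<\Gamma$, since the derivative $\nabla_x$ (which acts on the factor of frequency $\Lambda$) behaves very differently depending on whether that factor carries the larger or the smaller frequency.

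\emph{Case 1: $\Lambda\geq\Gamma$.} Here $\min(\Lambda,\Gamma)=\Gamma$ and the derivative sits on the high--frequency factor, so I expect to be able to replace $\nabla_x$ by a scalar $\Lambda$. Using the Riesz transform $R=\nabla_x(-\Delta)^{-1/2}$,
\[
\nabla_x e^{it\Delta}f \;=\; R\, e^{it\Delta}\sqrt{-\Delta}\, f \;=\; \Lambda\, R\, e^{it\Delta}\tilde f,
\qquad
\tilde f := \Lambda^{-1}\sqrt{-\Delta}\, f,
\]
where $\tilde f$ is again spectrally localized in $[\Lambda,2\Lambda]$ with $\|\tilde f\|_{L^2}\lesssim\|f\|_{L^2}$. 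Shen's theorem \cite{Shen05} gives the $L^2(M)$--boundedness of $R$, and Xu's H\"ormander multiplier theorem on manifolds with boundary \cite{Xu04} shows that the bounded symbol of $R$ can be absorbed into the parametrix construction used to prove Theorem~\ref{T:bilinearone}. Combining these two ingredients reduces the bilinear quantity to $\Lambda$ times the non--gradient bilinear Strichartz expression, and Theorem~\ref{T:bilinearone} supplies the final factor $\Gamma^{s_0}$.

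\emph{Case 2: $\Lambda<\Gamma$.} Now $\min(\Lambda,\Gamma)=\Lambda$, and transferring $\nabla_x$ to the high--frequency side would cost a factor of $\Gamma$, which is too much. Instead I would invoke Xu's gradient spectral cluster estimate \cite{Xu04} for two--dimensional manifolds with boundary, which bounds
\[
\|\nabla_x\chi(\sqrt{-\Delta}/\Lambda)h\|_{L^\infty(M)}\;\lesssim\;\Lambda^{3/2}\|h\|_{L^2(M)}.
\]
The proof of this cluster bound is precisely where the parametrix reappears: one expresses the spectral multiplier $\chi(\sqrt{-\Delta}/\Lambda)$ through the Fourier transform of $\chi$ applied to the half--wave propagator $\cos(t\sqrt{-\Delta})$, uses the finite propagation speed of the wave equation \cite{Sog02,Xu04} to cut the kernel to a neighborhood of the diagonal, and then runs the boundary parametrix locally. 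Applying the cluster bound to $h=e^{it\Delta}f$ pointwise in $t$, and using the unitarity $\|e^{it\Delta}g(t)\|_{L^2}=\|g\|_{L^2}$, gives
\[
\|(\nabla_x e^{it\Delta}f)\, e^{it\Delta}g\|_{L^2([0,1]\times M)}
\;\leq\;\|\nabla_x e^{it\Delta}f\|_{L^\infty_{t,x}}\,\|e^{it\Delta}g\|_{L^2_{t,x}}
\;\lesssim\;\Lambda^{3/2}\|f\|_{L^2}\|g\|_{L^2},
\]
which is stronger than the required $\Lambda^{1+s_0}\|f\|_{L^2}\|g\|_{L^2}$ for any $s_0>\tfrac{1}{2}$, in particular for $s_0>\tfrac{2}{3}$.

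The hard part is the gradient spectral cluster estimate underlying Case~2. On a manifold with boundary we lose both the global Fourier calculus of Euclidean space and the tight eigenvalue clustering that \cite{BGT05,Ant06} could exploit on the sphere; finite propagation speed for the wave equation is essentially the only surviving tool, and it must be wedded to the very local boundary parametrix built for Theorem~\ref{T:bilinearone}. Case~1 is, by comparison, routine once Shen's $L^2$--boundedness of the Riesz transform and Xu's H\"ormander multiplier theorem are taken for granted.
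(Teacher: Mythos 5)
Your high-level outline — splitting on which frequency is larger, using the Riesz transform when the gradient sits on the high-frequency factor, and Xu's gradient cluster estimate plus finite propagation speed when it sits on the low-frequency factor — matches the narrative in the paper's introduction. But the way you assemble these ingredients in Case~2 does not close, and the exponent you claim is not correct.

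The key gap is your claimed estimate $\|\nabla_x\chi(\sqrt{-\Delta}/\Lambda)h\|_{L^\infty(M)}\lesssim\Lambda^{3/2}\|h\|_{L^2(M)}$ for a \emph{dyadic} band $[\Lambda,2\Lambda]$. Xu's Theorem~1 gives $\|\nabla\chi_\l f\|_{L^\infty}\lesssim\l^{(n+1)/2}\|f\|_{L^2}$ only for the \emph{unit} band projector $\chi_\l$ onto $[\l,\l+1)$. A dyadic band contains $\sim\Lambda$ unit bands; decomposing $h=\sum_l\chi_l h$ and using Cauchy--Schwarz in $l$ costs an extra $\Lambda^{1/2}$, so the honest bound is $\|\nabla h\|_{L^\infty}\lesssim\Lambda^{3/2}\cdot\Lambda^{1/2}\|h\|_{L^2}=\Lambda^2\|h\|_{L^2}$ (consistent with Euclidean Bernstein, $\|\nabla P_\Lambda h\|_{L^\infty}\lesssim\Lambda\cdot\Lambda^{n/2}\|h\|_{L^2}=\Lambda^2\|h\|_{L^2}$ in $n=2$). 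Plugging $\Lambda^2$ into your H\"older choice $\|\nabla e^{it\Delta}f\|_{L^\infty_{t,x}}\|e^{it\Delta}g\|_{L^2_{t,x}}$ gives $\Lambda^2\|f\|_{L^2}\|g\|_{L^2}$, strictly worse than the required $\Lambda^{1+s_0}$ for $s_0<1$, and in any case your H\"older split throws away the bilinear gain entirely: $\|e^{it\Delta}g\|_{L^2_{t,x}}=\|g\|_{L^2}$ by conservation of mass, with no frequency improvement. The paper's Case~2 instead uses H\"older as $\|\nabla u\|_{L^2_tL^\infty_x}\|v\|_{L^\infty_tL^2_x}$, then proves the genuine Bernstein inequality $\|\nabla\beta_k(D)f\|_{L^\infty(M)}\lesssim\Lambda\|f\|_{L^\infty(M)}$ — \emph{this} is where Xu's estimate in its dual $L^1\to L^1$ form and finite propagation speed for the wave equation enter — and finally combines with the $L^2_tL^\infty_x$ linear Strichartz estimate $\|u\|_{L^2_tL^\infty_x}\lesssim\Lambda^{2/3+\varepsilon}\|f\|_{L^2}$ built from the parametrix in Sections~2--3. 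You cite the parametrix only inside the cluster-estimate proof; in fact it is the $L^2_tL^\infty_x$ Strichartz bound from the parametrix, not any $L^\infty_{t,x}$ bound, that supplies the $\Lambda^{2/3+\varepsilon}$ and makes the exponents close.

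Your Case~1 is closer to the paper's but is stated too loosely. You cannot simply ``absorb'' the Riesz transform $R$ inside the bilinear $L^2_{t,x}$ norm from its $L^2(M)$-boundedness; you need to apply H\"older first, putting $\nabla e^{it\Delta}f$ in $L^\infty_tL^2_x$ so that $\|\nabla e^{it\Delta}f\|_{L^2(M)}\lesssim\Lambda\|f\|_{L^2(M)}$ follows from $R$ being $L^2$-bounded, and then bounding the second factor $\|e^{it\Delta}g\|_{L^2_tL^\infty_x}\lesssim\Gamma^{s_0}\|g\|_{L^2}$ by the linear $L^2_tL^\infty_x$ Strichartz estimate (not by invoking Theorem~\ref{T:bilinearone} directly, which would require moving $R$ through the bilinear norm).
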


After we establish \eqref{E:bilinearStrionemine} and \eqref{E:bilinearStritwomine} to solutions
of~\eqref{E:Schrodinger} satisfying either Dirichlet or Neumann boundary conditions for the general 2
dimensional compact manifolds with boundary,  we will follow Anton's~\cite{Ant06} argument
to prove local well-posedness property in our setting.   \par

We consider the following Cauchy problem in  2-dimensional
compact manifolds with boundary:
\begin{equation}\label{E:Cauchy}
\left\{
\begin{array}{rll}
i\partial_tu+\triangle u &=& \alpha|u|^2u,\;{\rm in}\;\;\mathbb{R}\times M  \\
u|_{t=0}&=& u_0, \;{\rm on}\;\;M \\
u|_{\partial M}&=&0\;({\rm Dirichlet}),\;\;\;(\rm
or)\;\;\;\;N_x\cdot \nabla u|_{\partial M}=0\;\;({\rm Neumann})
\end{array}
\right.
\end{equation}
where $\alpha=\pm 1$. When $\alpha=1$, the equation is defocusing. When $\alpha=-1$, the equation is focusing.
 We consider the local well-posedness property of~\eqref{E:Cauchy}.\par

\begin{defn}
Let $s$ be a real number. We shall say that the Cauchy problem~\eqref{E:Cauchy} is
uniformly well-posed in $H^s(M)$ if, for any bounded subset $B$ of $H^s(M)$,
there exists $T>0$ such that the flow map
\begin{equation*}
u_0\in C^{\infty}(M)\cap B\mapsto u\in C([-T,T],H^s(M))
\end{equation*}
is uniformly continuous when the source space is
endowed with $H^s$ norm, and when the target space is endowed with
\begin{equation*}
\|u\|_{C_TH^s}=sup_{|t|\leq T}\|u(t)\|_{H^s(M)}
\end{equation*}
\end{defn}

Our discussions in the following focus again in 2 dimensional case.
For manifolds without boundary, we only consider first two equations of ~\eqref{E:Cauchy}.
The first result was due to Bourgain~\cite{Bou99} who built the local well-posedness
result in $H^s$ for $s>0$ on the flat torus.
Recently, Burq, G\'{e}rard and Tzvetkov ~\cite{BGT04} use Strichartz estimates to establish
local well-posedness of cubic nonlinear Schr\"{o}dinger equation in $H^s(M)$ for
$s>\frac{1}{2}$ on 2 dimensional manifold without boundary.
In ~\cite{BGT05} they proved the local well-posed property in $H^{s}(M)$ for $s>\frac{1}{4}$
on sphere and Zoll surface by using the bilinear Strichartz estimates~\eqref{E:bilinearStrione}
with $s_0>\frac{1}{4}$.

For manifolds with boundary, it is natural to except a more loss of
derivatives due to Rayleigh whispering galley modes.
In the case of domains of $\mathbb{R}^2$ the local well-posedness
for ~\eqref{E:Cauchy} with Dirichlet boundary condition and
 $s=1$ were proved by Anton~\cite{Ant05}.
On the other direction,
Burq, G\'{e}rard and Tzvetkov ~\cite{BGT03} built an illposedness
result on a disc of $\mathbb{R}^2$, for $s<\frac{1}{3}$.

Our result is the following.\par

\begin{thm}\label{T:well-posed}
If $(M,g)$ is a 2 dimensional manifold with boundary, then the Cauchy problem ~\eqref{E:Cauchy}
is uniformly well-posed
in $H^s(M)$ for every $s>\frac{2}{3}$.
\end{thm}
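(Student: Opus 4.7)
The plan follows the Bourgain--Burq--G\'erard--Tzvetkov--Anton framework for deriving local well-posedness of cubic NLS from bilinear Strichartz estimates. Introduce the Bourgain-type spaces $X^{s,b}(\Real\times M)$ adapted to the Schr\"odinger flow on $(M,g)$ with the prescribed boundary condition: writing $u(t,x) = \sum_j c_j(t) e_j(x)$ in the orthonormal eigenbasis of $-\Delta_g$,
\[
\|u\|_{X^{s,b}}^2 = \sum_j \lr{\Lambda_j}^{2s}\int_{\Real} \lr{\tau + \Lambda_j^2}^{2b} |\hat{c}_j(\tau)|^2 \, d\tau.
\]
The standard linear theory---the embedding $X^{s,b}\hookrightarrow C(\Real;H^s(M))$ for $b>1/2$, the homogeneous estimate $\|\eta(t)e^{it\Delta}f\|_{X^{s,b}}\lesssim \|f\|_{H^s}$ for $\eta\in C_0^\infty(\Real)$, and the Duhamel estimate $\|\eta(t/T)\int_0^t e^{i(t-t')\Delta}F(t')\,dt'\|_{X^{s,b}}\lesssim T^{1-b-b'}\|F\|_{X^{s,-b'}}$ for $0<b'<1/2<b<1-b'$---carries over to the boundary setting verbatim, since only the spectral decomposition of the Dirichlet or Neumann Laplacian is used.

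By a standard transference argument, the bilinear estimates \eqref{E:bilinearStrionemine} and \eqref{E:bilinearStritwomine} upgrade: for $b>1/2$ and any $u,v$ spectrally localized in dyadic shells of order $\Lambda,\Gamma$ respectively, and any $s_0>\frac{2}{3}$,
\[
\|uv\|_{L^2([0,1]\times M)} \le C(\min(\Lambda,\Gamma))^{s_0}\|u\|_{X^{0,b}}\|v\|_{X^{0,b}},
\]
\[
\|(\nabla u)v\|_{L^2([0,1]\times M)} \le C\Lambda(\min(\Lambda,\Gamma))^{s_0}\|u\|_{X^{0,b}}\|v\|_{X^{0,b}}.
\]
These put us in a position to argue with general space-time functions rather than with pure free solutions.

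By the now-classical reduction of local well-posedness to a trilinear estimate in $X^{s,b}$ (see~\cite{BGT04,BGT05,Ant06}), Theorem~\ref{T:well-posed} will follow once we establish
\[
\||u|^2 u\|_{X^{s,-b'}} \le C\|u\|_{X^{s,b}}^3, \qquad s>\tfrac{2}{3}, \quad b'<\tfrac{1}{2}<b, \quad b+b' \text{ close to } 1.
\]
The route is a dyadic Littlewood--Paley decomposition $u = \sum_\Lambda u_\Lambda$ in each factor, dualization against a test function $w\in X^{-s,b'}$, and Cauchy--Schwarz pairing the resulting four factors into two $L^2$ products, each controlled by the transferred bilinear estimates above. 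The main obstacle is the dyadic summation at the critical exponent: the frequency-balanced configurations (where all $\Lambda_j$ are comparable) are controlled directly by the non-gradient bilinear estimate and sum because $s>s_0$, but the problematic high-high-to-low-low configurations exhibit a ``derivative loss'' on the high-frequency pair that no amount of pure bilinear pairing can absorb. This is precisely where the gradient bilinear estimate \eqref{E:bilinearStritwomine} enters: trading the factor of $\Lambda$ in \eqref{E:bilinearStritwomine} against a high-frequency $H^s$ norm produces the extra off-diagonal gain $(\min/\max)^\delta$ needed to close the sum at any $s>\frac{2}{3}$. Once the trilinear estimate is in hand, the Picard iteration in a ball of $X^{s,b}([-T,T])$ for $T=T(\|u_0\|_{H^s})$ small produces the unique solution and yields the uniform continuity of the flow map, proving Theorem~\ref{T:well-posed}.
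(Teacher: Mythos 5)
Your proposal takes the same overall route as the paper: Bourgain spaces $X^{s,b}$ built from the spectral decomposition, transference of both bilinear estimates to $X^{0,b}$, reduction to the trilinear estimate $\||u|^2u\|_{X^{s,-b'}}\lesssim\|u\|_{X^{s,b}}^3$, and then a dyadic decomposition plus dualization and Cauchy--Schwarz. That framework is correct and matches the paper's Lemmas~\ref{L:localexistenceone}--\ref{L:localexistencethree}.

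Where the proposal is off is in how and where the gradient bilinear estimate is actually used. You describe the difficult case as ``high-high-to-low-low'' and claim that one directly trades the factor $\Lambda$ in \eqref{E:bilinearStritwomine} against an $H^s$ norm to gain off the diagonal. In the paper (and in Anton's argument that it follows), the problematic regime is different: writing the dual pairing $I(\underline{N})=\int u_{0N_0}u_{1N_1}u_{2N_2}u_{3N_3}$ with $N_3\le N_2\le N_1$, the non-gradient bilinear estimate alone closes the sum only when $N_0\lesssim N_1$, because it yields the factor $(N_0/N_1)^s$ after inserting the weights. When the dual frequency $N_0$ \emph{exceeds} $N_1$, that factor no longer decays. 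The mechanism used to rescue this case is not a direct re-weighting of \eqref{E:bilinearStritwomine}: one writes $u_{0N_0}=-N_0^{-2}\Delta\,Tu_{0N_0}$ (with $T$ an $L^2$-bounded multiplier on the shell), applies Green's formula --- which is exactly where the Dirichlet or Neumann boundary condition is needed so that boundary terms vanish --- and Leibniz's rule to move $\Delta$ onto $u_{1N_1}u_{2N_2}u_{3N_3}$. The term $\Delta u_{1N_1}\,u_{2N_2}u_{3N_3}$ is absorbed by the non-gradient estimate via $\Delta u_{1N_1}\sim N_1^2 Vu_{1N_1}$, while the cross term $(\nabla u_{1N_1})(\nabla u_{2N_2})u_{3N_3}$ is precisely where the gradient bilinear estimate \eqref{E:bilinearBourgaintwo} is needed. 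The result is the gain $(N_1/N_0)^{2-s}$, which is summable for $N_0\gtrsim N_1$ since $s<2$. Without the integration-by-parts step there is no source of decay in the regime $N_0\gg N_1$, so as written your sketch would not close; also note that the transferred bilinear estimates are interpolated against the cheap $L^4_tL^\infty_x$ Sobolev bound (\eqref{E:nonlinearone}) to bring $b$ below $1/2$ before the $K$-summation, another step your proposal passes over.
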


\section{Reductions}

We start with the proof of Theorem~\ref{T:bilinearone}. The Laplace-Beltrami
operators on $M$ will take the following form in local coordinates
\begin{equation}\label{E:locallaplacian}
(Pf)(x)=\rho^{-1}\sum_{i,j=1}^{n}\partial_i(\rho(x)g^{ij}(x)\partial_jf(x))
\end{equation}

Assume $\mathbb{I}_{\Lambda\leq\sqrt{-\Delta}\leq2\Lambda}(f)=f\;\;,\;\;\mathbb{I}_{\Gamma\leq\sqrt{-\Delta}\leq2\Gamma}(g)=g$ and $\Lambda<\Gamma$.  Then
\begin{equation}\label{E:localgoalone}
\begin{aligned}
\|e^{it\Delta}f\;e^{it\Delta}g\|_{L^2([0,1]\times M)}&\lesssim\|v\|_{L^{\infty}([0,1];L^2(M))}\|u\|_{L^{2}([0,1];L^{\infty}(M))}\\
&\lesssim\|g\|_{L^2(M))}\|u\|_{L^{2}([0,1];L^{\infty}(M))},
\end{aligned}
\end{equation}
where we have used the conservation of mass for the free Schr\"{o}dinger operator in the last inequality.

We define Sobolev spaces on $M$ using the spectral resolution of $P$,
\begin{equation*}
\|f\|_{H^s(M)}=\|\langle D_p\rangle^s f\|_{L^2(M)}\;\;,\;\;\langle D_p\rangle=(1-P)^{\frac{1}{2}}
\end{equation*}
By elliptic regularity (e.g [~\cite{GT83}, Theorem 8.10]) the space $H^s$ coincide with the Sobolev spaces
defined using local coordinates, provided $0\leq s\leq 2$.

Let $r=\frac{2}{3}+\varepsilon>\frac{2}{3}\;,\;s=r-1$. Then we need to establish
\begin{equation*}
\|u\|_{L^2([0,1];L^{\infty}(M))}\lesssim\|f\|_{H^r(M)}\approx({\Lambda})^{r}\|f\|_{L^2(M)},
\end{equation*}
or equivalently,
\begin{equation*}
\|u\|_{L^2([0,1];L^{\infty}(M))}\lesssim\|\Lambda^s f\|_{H^1(M)}
\end{equation*}

By conservation law of free Schr\"{o}dinger operator which is equivalent to
 \begin{equation}\label{E:goalone}
 \|u\|_{L^2([0,1];L^{\infty}(M))}\lesssim
\|\Lambda^s u\|_{L^{\infty}([0,1];H^1(M))}
\end{equation}
Although $(2,2,\infty)$ is not Schr\"{o}dinger admissible, we
should see that once we localize both time and frequency we can
still get desired type of Strichartz estimates.

We work in boundary normal coordinates
for the Riemannian metric $g_{ij}$ that is dual $g^{ij}$ of
(\ref{E:locallaplacian}). Let $x_2>0$ define the manifold $M$, and $x_1$ is a
coordinate function on $\partial M$ which we choose so that
$\partial_{x_1}$ is of unit length along $\partial M$. In these
coordinates,
\begin{equation*}
g_{22}(x_1,x_2)=1\;\;\;\;g_{11}(x_1,0)=1\;\;\;g_{12}(x_1,x_2)=0
\end{equation*}

We now extend the coefficient $g^{11}$ and $\rho$ in an even
manner across the boundary, so that
\begin{equation*}
g^{11}(x_1,-x_2)=g^{11}(x_1,x_2)\;\;\;\rho(x_1,-x_2)=\rho(x_1,x_2).
\end{equation*}

The extended functions are then piecewise smooth, and of Lipschitz
regularity across $x_2=0$. Because $g$ is diagonal, the operator
$P$ is preserved under the reflection $x_2\rightarrow -x_2$.
Eigenspaces for the extended operator $\tilde{P}$ decompose
into symmetric and antisymmetric functions; these correspond to extensions of eigenfunctions for $P$ satisfying
Dirichlet (resp. Neumann) conditions. These eigenfunctions are of $C^{1,1}$ across the boundary.
The Schr\"{o}dinger flow for $P$ is thus extended to $\tilde P.$

Hence matters reduces to considering the Schr\"{o}dinger evolution
on the manifold without boundary with Lipschitz metrics. And we have to show
\begin{equation*}
\|u\|_{L^2([0,1],L^{\infty}(M))}\lesssim\|\Lambda^s u\|_{L^{\infty}([0,1];H^1(M))}
\end{equation*}

By taking a finite partition
of unity, it suffices to prove that
\begin{equation*}
\|\psi u\|_{L^2([0,1];L^{\infty}(\mathbb{R}^2))}\lesssim
\|\Lambda^s u\|_{L^{\infty}([0,1];H^1(M))}
\end{equation*}
for each smooth  cutoff $\psi$ supported in  a suitably chosen
coordinate charts.
 We will choose coordinate
charts such that the image contains the unit ball, and
\begin{equation*}
\|g^{ij}-\delta_{ij}\|_{Lip(B_1(0))}\leq c_0\;\;,\;\;\|\rho-1\|_{Lip(B_1(0))}\leq c_0
\end{equation*}
 for $c_0$ to be taken suitably
small. We take $\psi$ supported in the unit ball, and assume
$g^{ij}$ and $\rho$ are extended so that the above holds
globally on $\mathbb{R}^2$.\par

We denote $u=u_k$ to address that it's frequency being localized to $\Lambda=2^k$, the
estimates we need is now
\begin{equation*}
\|\psi u_k\|_{L^2([0,1];L^{\infty}(\mathbb{R}^2))}\lesssim \|\Lambda^s u_k\|_{L^{\infty}([0,1];H^1(M))}.
\end{equation*}

Let $\{\beta_j(D)\}_{j\geq 0}$ be a Littlewood-Paley partition of
unity on $\mathbb{R}^n$, and let $v_j=\beta_j(D)(\psi u_k)\;,\;v_j^s=(2^j)^sv_j$
, then we will see that it is equivalent to show that for each $j$,
\begin{equation}\label{E:goaltwo}
\|v_j\|_{L^2_tL^{\infty}_x}\lesssim
\|v_j^s\|_{L^{\infty}_tH^1_x}
+({2^j})^{s-1/3}\|(D_t+P)v_j\|_{L^{\infty}_tL^2_x})
\end{equation}
is true, where the norm is taken over $(t,x)=[0,1]\times\mathbb{R}^2$. Note that for any $\varepsilon>0$
\begin{equation*}
\|\psi u_k\|_{L^2_tL^{\infty}_x}\lesssim\|2^{j\varepsilon} v_j \|_{L^2_tL^{\infty}_xl^j_2}
\lesssim\|2^{j\varepsilon} v_j \|_{l^j_2L^2_tL^{\infty}_x}.
\end{equation*}

Here $\varepsilon$ can be absorbed by $s$ in~\eqref{E:goaltwo}, thus we only have to deal
with $\| v_j \|$  instead of $\|2^{j\varepsilon} v_j \|$ in~\eqref{E:goaltwo}.

On the other hand,
$$\begin{aligned}\|v_j^s\|_{L^{\infty}([0,1];H^1({\mathbb{R}^2}))}&
\lesssim\min\{(2^j)\|v_j^s\|_{L^{\infty}([0,1];L^2({\mathbb{R}^2}))}
,(2^j)^{-1}\|v_j^s\|_{L^{\infty}([0,1];H^2({\mathbb{R}^2}))}\}\\
&\lesssim\min\{(2^j)^{1+s}\|u_k\|_{L^{\infty}([0,1];L^2(M))}
,(2^j)^{-1+s}\|u_k\|_{L^{\infty}([0,1];H^2(M))}\}
\end{aligned}$$
To sum up $\|v_j^s\|_{L^{\infty}_tH^1_x}$ over $j$, we dominate those terms with $j\leq k$ by
the first term inside minimum bracket, dominate those terms with $j\geq k$ by the second term inside
minimum bracket. The series is then bounded by a finite sum plus a geometric series.
So the summation over j of first terms in the right hand side
of~\eqref{E:goaltwo} is bounded by
$$\begin{aligned}
(2^k)^{1+s}\|u_k\|_{L^{\infty}([0,1];L^{2}(M))}+(2^k)^{-1+s}\|u_k\|_{L^{\infty}([0,1];H^{2}(M))}
&\lesssim(2^k)^{s}\|u_k\|_{L^{\infty}([0,1];H^1(M))}\\
&\approx \|\Lambda^s u_k\|_{L^{\infty}([0,1];H^1(M))}\end{aligned}$$

For the second term in the right hand side of~\eqref{E:goaltwo}, we note that for a Lipschitz function $a$,
$[\beta_{j}(D),a]:H^{s-1}\rightarrow H^s\;,\;s=0,1$. Hence  $[P,\beta_{j}(D)\psi]:H^1\rightarrow L^2$,
by Coifman-Meyer commutator theorem (see also Proposition 3.6B of ~\cite{Tay91}).
Therefore we have
\begin{equation}\label{E:remainder1}
\|(D_t+P)v_j\|_{L^{\infty}([0,1];L^2(\mathbb{R}^2))}
\lesssim \|u_k\|_{L^{\infty}([0,1]\;H^1(M))}.
\end{equation}
Furthermore, we claim that the following estimate is also true
\begin{equation}\label{E:remainder2}
\|(D_t+P)v_j\|_{L^{\infty}([0,1];L^2(\mathbb{R}^2))}
\lesssim 2^j\|u_k\|_{L^{\infty}([0,1]\;L^2(M))}
\end{equation}
First, we truncate the coefficients of $P$ to frequencies less than some small constant
times $2^j=\eta$ and denote the new coefficients and operator by $g^{ij}_{\eta}$ and $P_j$ respectively.
Note that the localized coefficients satisfy $|g^{ij}-g^{ij}_{\eta}|\lesssim 2^{-j}$. Thus
\begin{equation}\label{E:approxP}
\norm{(P_j-P)v_j}_{L^{\infty}([0,1];L^2(\mathbb{R}^2))}\lesssim 2^j\norm{v_j}_{L^{\infty}([0,1];L^2(\mathbb{R}^2))}
\lesssim 2^j\norm{u_k}_{L^{\infty}([0,1];L^2(M))}.
\end{equation}
Combine this with
\begin{equation*}
\begin{split}
&\|(D_t+P)v_j\|_{L^{\infty}[0,1];L^2(\mathbb{R}^2)}\\
&\leq\norm{(D_t+P_j)v_j}_{L^{\infty}[0,1];L^2(\mathbb{R}^2)}
+\norm{(P-P_j)v_j}_{L^{\infty}[0,1];L^2(\mathbb{R}^2)},
\end{split}
\end{equation*}
we are reduced to estimate
\begin{equation*}
\norm{(D_t+P_j)v_j}_{L^{\infty}[0,1];L^2(\mathbb{R}^2)}.
\end{equation*}

However
\begin{equation}
\begin{split}
\|(D_t+P_j)v_j\|&_{L^{\infty}([0,1];L^2(\mathbb{R}^2))}
\approx 2^j \|(D_t+P_j)v_j\|_{L^{\infty}([0,1];H^{-1}(\mathbb{R}^2))}\\
&\lesssim 2^j\{\norm{(P_j-P)v_j}_{L^{\infty}([0,1];H^{-1}(\mathbb{R}^{2}))}
+\norm{(D_t+P)v_j}_{L^{\infty}([0,1];H^{-1}(\mathbb{R}^{2}))}\}\\
&\lesssim 2^j\|u_k\|_{L^{\infty}([0,1]\;L^2(M))}.
\end{split}
\end{equation}
The first line is due to the localization of $P_j$ and $v_j$.
Next we note that multiplication by a Lipschitz function $\rho$ is a bounded operator in $H^{-1}$.
Thus we regard $P$ and $P_j$ as in divergent form, we can thus bound the first term of the second line as~\eqref{E:approxP}.
While the second term of the second line is also bounded, thanks again to Coifman-Meyer commutator
theorem.

Combine~\eqref{E:remainder1} and~\eqref{E:remainder2}, we thus have
\begin{equation}\|(D_t+P)v_j\|_{L^{\infty}([0,1];L^2(\mathbb{R}^2))}
\lesssim \min\{2^j\|u_k\|_{L^{\infty}([0,1]\;L^2(M)},\|u_k\|_{L^{\infty}([0,1]\;H^1(M))}\}.
\end{equation}

Now we are ready to handle the second term in in the right hand side of~\eqref{E:goaltwo}. For $j\leq k$, we use
\begin{equation*}
 (2^j)^{s-1/3}\|(D_t+P)v_j\|_{L^{\infty}([0,1]\;L^2(\mathbb{R}^2))}\leq (2^j)^{s-1/3}2^j\|u_k\|_{L^{\infty}([0,1]\;L^2(M))}.
\end{equation*}
Therefore the sum of $j=1,\cdots,k$ terms will be bounded by
\begin{equation}\label{E:secondtermone}
C(2^k)^{1/3+\varepsilon}\|u_k\|_{L^{\infty}([0,1]\;L^2(M))}.
\end{equation}

For $j\geq k$, we use
\begin{equation*}
(2^j)^{s-1/3}\|(D_t+P)v_j\|_{L^{\infty}([0,1];L^2({\mathbb{R}^2}))}
\lesssim (2^{(j-k)})^{s-1/3}(2^{-k})^{1/3}\|\Lambda^s u_k\|_{L^{\infty}([0,1];H^1(M))}
\end{equation*}
Since $s-1/3<0$, the sum of $j\geq k$ terms is bounded by
\begin{equation}\label{E:secondtermtwo}
(2^{-k})^{1/3}\|\Lambda^s u_k\|_{L^{\infty}([0,1]\;H^1(M))}.
\end{equation} Thus the sum of~\eqref{E:secondtermone} and ~\eqref{E:secondtermtwo} is bounded
by $\|\Lambda^s u_k\|_{L^{\infty}([0,1];H^1(M))}$.

Now let $\lambda=2^j\;,\;w_{\lambda}=v_j$, ~\eqref{E:goaltwo} can be written as
\begin{equation*}
\|w_{\lambda}\|_{L^2([0,1];L^{\infty}(\mathbb{R}^2))}\lesssim \lambda^{\frac{2}{3}+\varepsilon}
(\|w_{\lambda}\|_{L^{\infty}([0,1];L^2({\mathbb{R}^2}))}
+{\lambda}^{-\frac{4}{3}}\|(D_t+P)w_\lambda\|_{L^{\infty}([0,1];L^2(\mathbb{R}^2))})
\end{equation*}
which is implied by showing for each interval $I_{\lambda}$ with
length $\lambda^{-\frac{4}{3}}$, we all have
\begin{equation*}
\|w_\lambda\|_{L^2(I_{\lambda};L^{\infty}(\mathbb{R}^2))}\lesssim (\lambda)^{\varepsilon}(\|w_\lambda\|_{L^{\infty}(I_{\lambda}
;L^2({\mathbb{R}^2}))}+\|(D_t+P)w_\lambda\|_{L^1(I_{\lambda};L^2(\mathbb{R}^2))})
\end{equation*}

Recall that the operator $P$ here is rough. Thus
we regularize the coefficients of $P$ by setting
\begin{equation*}
g^{ij}_{\lambda}=S_{\lambda^{2/3}}(g^{ij}),\;\;\;\rho_{\lambda}=S_{\lambda^{2/3}}(\rho)
\end{equation*}
where $S_{\lambda^{2/3}}$
denotes a truncation of a function to frequencies less than
$\lambda^{\frac{2}{3}}$. Let $P_{\lambda}$ be the operator with
coefficients $g_{\lambda}^{ij}$ and $\rho_\lambda$. Then
\begin{equation*}
\|(P-P_{\lambda})w_\lambda\|_{L^1(I_{\lambda};L^2(\mathbb{R}^2))}\lesssim \|w_\lambda\|_{L^{\infty}
(I_{\lambda};L^2(\mathbb{R}^2))}
\end{equation*}
since we know
\begin{equation*}
|g^{ij}_{\lambda}-g^{ij}|\lesssim \lambda^{-\frac{2}{3}}
\end{equation*} and
similarly for $\rho$.

Then we rescale the problem by letting $\mu=\lambda^{\frac{2}{3}}$ and define
\begin{equation*}
u_\mu(t,x)=w_{\lambda}(\lambda^{-\frac{2}{3}}t,\lambda^{-\frac{1}{3}}x),\;
\;\;\;Q_{\mu}=P_{\lambda}(\lambda^{-\frac{1}{3}}x,D)
\end{equation*}
The function $u_{\mu}(t,\dot)$ is localized to frequencies of size $\mu$, and the coefficients of $Q_{\mu}$ are
localized to frequencies of the size less than $\mu^{\frac{1}{2}}$. This implies the following estimates of the
coefficients of $Q_{\mu}$
\begin{equation*}
\|\partial_x^{\alpha}g^{ij}_{\lambda}(\lambda^{-\frac{1}{3}}x)\|
+\|\partial_x^{\alpha}\rho_{\lambda}(\lambda^{-\frac{1}{3}}x)\|\leq
C_\alpha \mu^{\frac{1}{2}{\rm max}(0,|\alpha|-2)}.
\end{equation*}

The time interval $I_\lambda$ scales to $\mu^{-1}.$ Also note that by our reduction
$\|g_{\lambda}^{ij}-\delta^{ij}\|_{C^2}\ll 1$. Thus we have reduced the proof of Theorem~\ref{T:bilinearone} to
the following

\begin{thm}\label{T:localStri}
Suppose that $u(t,x)$ is localized to frequencies
$|\xi|\in[\frac{1}{4}\lambda,4\lambda]$ and solves

\begin{equation*}
 (D_t+\sum_{1\leq i,j\leq n}a^{ij}(x)\partial_{x_i}\partial_{x_j}+\sum_{1\leq i\leq n} b^i(x)\partial_{x_i})u=F
\end{equation*}
Assume also that the metric satisfies
\begin{equation*}
\|a^{ij}-\delta_{ij}\|_{C^2}\ll 1,\;\;\;\;\;\|b^i\|_{C^1}\lesssim 1
\end{equation*}
\begin{equation*}
{\rm supp}(\widehat{a^{ij}}),\;{\rm supp}(\widehat{b^{i}})\subset B_{\lambda^{1/2}}(0).
\end{equation*}
Then the following estimate holds
\begin{equation*}
\|u\|_{L^2([0,\lambda^{-1}];L^{\infty}(\mathbb{R}^2))}\lesssim (\log \lambda)^{\frac{1}{2}}(\|u\|_{L^{\infty}([0,\lambda^{-1}]
;L^2({\mathbb{R}^2}))}+\|F\|_{L^1([0,\lambda^{-1}];L^2(\mathbb{R}^2))})
\end{equation*}
\end{thm}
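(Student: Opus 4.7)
The plan is to prove Theorem~\ref{T:localStri} by combining a short-time geometric-optics (WKB) parametrix for $D_t+P$ with the standard endpoint Strichartz $TT^{*}$ argument, which in two space dimensions automatically produces the $(\log\lambda)^{1/2}$ loss as a byproduct of the logarithmic divergence of the dispersive kernel. By Duhamel's formula and Minkowski's inequality in $t$ on $[0,\lambda^{-1}]$, it suffices to bound the homogeneous propagator, i.e.\ to show
\begin{equation*}
\|U(t)f\|_{L^{2}([0,\lambda^{-1}];L^{\infty}(\mathbb{R}^{2}))}\lesssim (\log\lambda)^{1/2}\|f\|_{L^{2}(\mathbb{R}^{2})}
\end{equation*}
for data $f$ spectrally localized to $|\xi|\in[\lambda/4,4\lambda]$, where $U(t)$ is the flow of $D_{t}+P$.

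For the parametrix I would look for $\widetilde U(t)$ of the form
\begin{equation*}
\widetilde U(t)f(x)=\frac{1}{(2\pi)^{2}}\int e^{i\phi(t,x,\xi)}\,a(t,x,\xi)\,\hat f(\xi)\,d\xi,
\end{equation*}
where $\phi$ solves the eikonal equation $\partial_{t}\phi+\sum a^{ij}(x)\partial_{i}\phi\,\partial_{j}\phi=0$ with $\phi(0,x,\xi)=x\cdot\xi$, and $a$ solves the associated transport equation with $a(0,x,\xi)=1$. The hypotheses $\|a^{ij}-\delta_{ij}\|_{C^{2}}\ll 1$ and $\mathrm{supp}\,\widehat{a^{ij}}\subset B_{\lambda^{1/2}}(0)$ are calibrated so that on the short window $|t|\le\lambda^{-1}$ the phase is a small perturbation of the frozen-coefficient phase $x\cdot\xi-t\sum a^{ij}(x)\xi_{i}\xi_{j}$, and higher derivatives of the coefficients cost only factors of $\lambda^{1/2}$ that are absorbed by compensating powers of $t$. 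A one-step ansatz (or short finite WKB expansion) then suffices to render the formal residue $(D_{t}+P)\widetilde U(t)f$ uniformly $O_{L^{2}}(\|f\|_{L^{2}})$ on the interval; the $b^{i}$ terms are absorbed as a lower-order perturbation by the same calibration.

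From the parametrix I would extract, via stationary phase applied to its Schwartz kernel, the dispersive bound
\begin{equation*}
\|\widetilde U(t)f\|_{L^{\infty}}\lesssim |t|^{-1}\|f\|_{L^{1}},\qquad 0<|t|\le\lambda^{-1},
\end{equation*}
using that $|\det D^{2}_{\xi}\phi(t,\cdot,\cdot)|\sim t^{2}$ on the relevant cotangent region. Combined with the Bernstein bound $\|\widetilde U(t)f\|_{L^{\infty}}\lesssim\lambda^{2}\|f\|_{L^{1}}$ for frequency-$\lambda$ data, the $L^{1}\to L^{\infty}$ operator norm of $\widetilde U(t-s)$ is majorized by $\min(|t-s|^{-1},\lambda^{2})$, whose $L^{1}$-norm over $|t-s|\le\lambda^{-1}$ is $\lesssim 1+\log\lambda$. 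The $TT^{*}$ principle together with Schur's test then gives $\|\widetilde U(\cdot)f\|_{L^{2}_{t}L^{\infty}_{x}}\lesssim (\log\lambda)^{1/2}\|f\|_{L^{2}}$.

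Finally, transferring the estimate from $\widetilde U$ to the true propagator $U$ via Duhamel and the residue control from step two costs only a constant factor, and the inhomogeneous term in Theorem~\ref{T:localStri} is absorbed by Minkowski as noted at the start. The main technical obstacle is the third step: one must verify both the nondegeneracy of $\det D^{2}_{\xi}\phi$ and uniform pointwise bounds on the amplitude $a$ across the entire window $|t|\le\lambda^{-1}$ with only $C^{2}$ regularity on the coefficients. This is precisely the regime in which the frequency cutoff at scale $\lambda^{1/2}$ on the coefficients is matched to the semiclassical time scale $\lambda^{-1}$, the same calibration underpinning the wave-packet parametrices used in the prior work on Strichartz estimates for Schr\"odinger operators with rough metrics cited above.
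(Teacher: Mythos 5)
Your overall framework agrees with the paper's: reduce to the homogeneous propagator, set up a $TT^{*}$ argument, split the time window at $\lambda^{-2}$, use a fixed-time dispersive bound $\lesssim 1/t$ on $[\lambda^{-2},\lambda^{-1}]$ plus the trivial $\lambda^{2}$ bound on $[0,\lambda^{-2}]$, and integrate to pick up $\log\lambda$. Where you differ is in the choice of parametrix: you propose a classical WKB/Fourier-integral ansatz $\int e^{i\phi(t,x,\xi)}a(t,x,\xi)\hat f(\xi)\,d\xi$ with an eikonal phase, and derive dispersion by stationary phase in $\xi$. The paper instead conjugates the operator by a wave-packet (FBI) transform $T_{\lambda}$, transports the lifted function along the Hamilton flow in phase space, and estimates the resulting kernel $K(t,x,0,y)$ directly, importing the pointwise bound $|K|\lesssim 1/t$ from Section 4 of Blair--Smith--Sogge~\cite{BSS07}. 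The wave-packet route is specifically designed to sidestep the obstacle you correctly flag: one never needs global control of $\det D^{2}_{\xi}\phi$ or of amplitude derivatives across the whole $\xi$-shell; the Gaussian factors $g(\lambda^{1/2}(\cdot))$ localize physically to scale $\lambda^{-1/2}$, so everything reduces to local estimates along individual Hamilton trajectories, for which the symbol bounds $\|\partial^{\alpha}a^{ij}\|\lesssim\lambda^{(|\alpha|-2)/2}$ (after the truncation to frequency $\lambda^{1/2}$) are exactly what is needed. Your approach ought to work — the coefficients are smooth after truncation, not merely $C^{2}$, which is more favorable than your phrasing suggests — but carrying out the stationary phase uniformly, and quantifying the parametrix error over $[0,\lambda^{-1}]$ so that the Duhamel remainder is $O(\|f\|_{L^{2}})$ in $L^{1}_{t}L^{2}_{x}$, would require nontrivial bookkeeping that the wave-packet method makes essentially automatic (and that the paper is able to cite rather than reprove). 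So: a genuinely different parametrix construction feeding the same $TT^{*}$ skeleton, more classical but heavier in technical burden, versus the paper's phase-space transform tailored to rough-but-truncated metrics.
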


\section{Wave Packet and Parametrix}

To prove Theorem~\ref{T:localStri}, we need some notations for wave packet transform.
We fix a real, radial Schwartz function $g(x)\in\mathcal{S}(\mathbb{R}^2)$, with
$\|g\|_{L^2}=(2\pi)^{-1}$, and assume its Fourier transform
$h(\xi)=\hat{g}(\xi)$ is supported in the unit ball $\{|\xi|<1\}$.
For $\lambda\geq 1$, we define
$T_{\lambda}:\mathcal{S}'(\mathbb{R}^2)\rightarrow\mathcal{C}^{\infty}(\mathbb{R}^{4})$by
\begin{equation*}
(T_{\lambda}f)(x,\xi)=\lambda^{\frac{1}{2}}\int e^{-i\lr{\xi, z-x}}g(\lambda^{\frac{1}{2}}(z-x))f(z)dz.
\end{equation*}
A simple calculation shows that
\begin{equation*}
f(y)={\lambda}^{\frac{1}{2}}\int e^{i\lr{\xi,y-x}}g({\lambda}^{\frac{1}{2}}(y-x))(T_{\lambda}f)(x,\xi)dxd\xi,
\end{equation*}
so that $T_{\lambda}^{\ast}T_{\lambda}=I.$ In particular,
\begin{equation*}
\|T_{\lambda}f\|_{L^2(\mathbb{R}^{4}_{x,\xi})}=\|f\|_{L^2(\mathbb{R}^2_x)}.
\end{equation*}

Let
\begin{equation*}
D_t+A(x,D)+B(x,D)=D_t+\sum_{1\leq i,j\leq
n}a^{ij}(x)\partial_{x_i}\partial_{x_j}+
\sum_{1\leq i\leq n}b^i\partial_{x_i}.
\end{equation*}
We conjugate $A(x,D)$ by $T_{\lambda}$ and take a suitable
approximation to the resulting operator. Define the following
differential operator over $(x,\xi)$
\begin{equation*}
\widetilde{A}=-id_{\xi}a(x,\xi)\cdot d_x+id_x a(x,\xi)\cdot d_{\xi}+a(x,\xi)-\xi\cdot d_{\xi}a(x,\xi)
\end{equation*}
By the argument from wave packet methods (Lemmas 3.1-3.3 in
Smith~\cite{Smi06}), we have that if $\widetilde{\beta}_{\lambda}$
is a Littlewood-Paley cutoff truncating to frequencies
$|\xi|\approx \lambda$ then
\begin{equation*}
\|T_{\lambda}A(\cdot,D)\widetilde{\beta}_{\lambda}(D)-\widetilde{A}T_{\lambda}\widetilde{\beta}_{\lambda}(D)\|
_{L^2_{x}\rightarrow L^2_{x,\xi}}\lesssim \lambda
\end{equation*}

This yields that, if
$\tilde{u}(t,x,\xi)=(T_{\lambda}u(t,\cdot))(x,\xi)$, then
$\tilde{u}$ solves the equation
\begin{equation*}
(\partial_{t}+d_{\xi}a(x,\xi)\cdot d_x-d_x a(x,\xi)\cdot d_{\xi}+ia(x,\xi)-i\xi\cdot d_{\xi}a(x,\xi))
\tilde{u}(t,x,\xi)=\tilde{G}(t,x,\xi)
\end{equation*}
where $\tilde{G}$ satisfies
\begin{equation*}\int_{0}^{\lambda^{-1}}\|\tilde{G}(t,x,\xi)\|_{L^2_{x,\xi}}dt\lesssim \|u\|_{L^{\infty}([0,\lambda^{-1}];L^2])}
+\|F\|_{L^1([0,\lambda^{-1}];L^2)}
\end{equation*}

Given an integral curve $\gamma(r)\in\mathbb{R}^{4}_{x,\xi}$ of
the vector field
\begin{equation*}
\partial_t+d_{\xi}a(x,\xi)\cdot d_x-d_xa(x,\xi)\cdot d_{\xi}
\end{equation*}
with $\gamma(t)=(x,\xi)$, we denote
$\chi_{s,t}(x,\xi)=(x_{s,t},\xi_{s,t})=\gamma(s)$. Also define
\begin{equation*}
\sigma(x,\xi)=a(x,\xi)-\xi\cdot d_{\xi}a(x,\xi),\;\;\;\; \psi(t,x,\xi)=\int_{0}^t\sigma(\chi_{r,t}(x,\xi))dr
\end{equation*}

This allows us to write
\begin{equation*}
{\tilde u}(t,x,\xi)=e^{-i\psi(t,x,\xi)}{\tilde u}_0(\chi_{0,t}(x,\xi))+\int_0^te^{-i\psi(t-r,x,\xi)}\tilde G(r,
\chi_{r,t}(x,\xi))dr
\end{equation*}
where $\tilde u$ is an integrable superposition over $r$ of functions invariant under the
flow of $\tilde A$, truncated to $t>r$.

Since $u(t,x)=T_{\lambda}^{\ast}\tilde u(t,x,\xi)$ it thus
suffices to obtain estimates
\begin{equation}\label{E:wavepacketesta}
\|{\tilde
\beta_{\lambda}}(D)W_tf\|_{L^2_tL^{\infty}_{x}}\lesssim (\log
\lambda)^{\frac{1}{2}}\|f\|_{L^2_{x,\xi}}
\end{equation} where
$W_t$ acts on function $f(x,\xi)$ by the formula
\begin{equation}\label{E:wavepacketformula}
(W_tf)(y)=T^{\ast}_{\lambda}(e^{-i\psi(t,x,\xi)}f(\chi_{0,t}(\cdot)))(y)
\end{equation}

In order to get the desired estimates by $TT^{\ast}$ method, we
investigate the kernel $K(t,y,s,x)$ of $W_tW_s^{\ast}$ which is
\begin{equation*}
\lambda\int e^{-i\lr{\zeta,x-z}-i\int_{s}^{t}\sigma(\chi_{r,t}(z,\zeta))+i\lr{\zeta_t,y-z_t}}
g(\lambda^{\frac{1}{2}}(y-z_{t,s}))g(\lambda^{\frac{1}{2}}(x-z))dzd\zeta
\end{equation*}
Recall that ${\rm supp}(\hat g)\subset B_1(0)$. We are concerned
with
$\tilde{\beta}_{\lambda}W_tW^{\ast}_s\tilde{\beta}_{\lambda}$,
thus we can inserted a cutoff $S_{\lambda}(\zeta)$  into the
integrand which is supported in a set $|\zeta|\approx \lambda$.
Also note that the Hamiltonian vector field is independent of
time, that is $\chi_{t,s}=\chi_{t-s,0}$. We denote it by
$\chi_{t-s,0}(z,\zeta)=\chi_{t-s}(z,\zeta)=
(z_{t-s},\zeta_{t-s})$. It then suffices to consider $s=0$, and
the kernel $K(t,x,0,y)$ as
\begin{equation*}
\lambda\int e^{-i\lr{\zeta,x-z}-i\psi(t,z,\zeta)+i\lr{\zeta_t,y-z_t}}g(\lambda^{\frac{1}{2}}(y-z_{t,s}))
g(\lambda^{\frac{1}{2}}(x-z))S_{\lambda}(\zeta)dzd\zeta
\end{equation*}

We will built the estimates~\eqref{E:wavepacketesta} by considering the estimate for
time variable between $[0,\lambda^{-2}]$ and
$[\lambda^{-2},\lambda^{-1}]$ respectively. That is we will prove
\begin{equation}\label{E:waveesta}
\|{\tilde
\beta_{\lambda}}(D)W_tf\|_{L^2([0\;,\;\lambda^{-2}];L^{\infty}({\mathbb{R}^2}))}\lesssim\|f\|_{L^2_{x,\xi}}
\end{equation}
and
\begin{equation}\label{E:waveestb}
\|{\tilde\beta_{\lambda}}(D)W_tf\|_{L^2([\lambda^{-2}\;,\;\lambda^{-1}];L^{\infty}(\mathbb{R}^2))}\lesssim
(\log \lambda)^{\frac{1}{2}}\|f\|_{L^2_{x,\xi}}
\end{equation}
The inequality~\eqref{E:waveesta} is easy to prove , note that when
$t\in[0,\lambda^{-2}]$, it is easy to see that
\begin{equation}
|K(t,x,0,y)|\approx \lambda\cdot(\lambda^{-\frac{1}{2}})^2\cdot\lambda^2=\lambda^2.
\end{equation}

The term $(\lambda^{-\frac{1}{2}})^2$ came from the size of $g$ and $\lambda^2$ from $S_\lambda$.
Then the estimates follows from applying Schwartz inequality to time variables.

The inequality ~\eqref{E:waveestb} comes from establishing
\begin{equation}\label{E:dispersion}
|K(t,x,0,y)|\lesssim \frac{1}{t}
\end{equation}
 for $t\in[\lambda^{-2},\varepsilon\lambda^{-1}]$ with $\varepsilon$ chosen sufficient small and independent of
 $\lambda$. Then by Schwartz inequality, we get
\begin{equation*}
\|\tilde{\beta}_\lambda W_tW_s^\ast\tilde{\beta}_\lambda\|_{L^2\rightarrow L^2}\lesssim
\int_{\lambda^{-2}}^{\lambda^{-1}}\frac{1}{t}dt=\log\lambda.
\end{equation*}

The dispersive estimate~\eqref{E:dispersion} we need is actually proved
in the section 4 of Blair, Smith and Sogge~\cite{BSS07}. Hence we conclude Theorem~\ref{T:localStri}.

\section{Gradient Estimates}

Next we will prove Theorem~\ref{T:bilineartwo}.
Recall that we assume
\begin{equation*}
\mathbb{I}_{\Lambda\leq\sqrt{-\Delta}\leq2\Lambda}(f)
=f\;\;,\;\;\mathbb{I}_{\Gamma\leq\sqrt{-\Delta}\leq2\Gamma}(g)=g.
\end{equation*}
If $\Lambda>\Gamma$, we can prove as following
$$\begin{aligned}
\|\nabla(e^{it\triangle}f)e^{it\triangle}g\|_{L^2([0,1]\times M)}&\lesssim
\|\nabla e^{it\triangle}f\|_{L^{\infty}([0,1];L^2(M))}\|e^{it\triangle}g\|_{L^2([0,1]L^{\infty}(M))}\\
&\lesssim\Lambda\|e^{it\triangle}f\|_{L^{\infty}([0,1];L^2(M))}\Gamma^s\|g\|_{L^2(M)} \\
&\lesssim\Lambda\Gamma^s\|f\|_{L^2(M)}\|g\|_{L^2(M)}, \end{aligned}$$ where we have used the fact Riesz transform $\nabla({-\triangle})^{-1/2}$ is bounded on $L^2(M)$
(see~\cite{Shen05}) and then apply H\"{o}rmander multiple theorem (see~\cite{Xu04}) in the second inequality.\par

If $\Lambda<\Gamma$, as the reduction~\eqref{E:localgoalone}, Let
$r=\frac{5}{3}+\varepsilon\;,\;s=r-1$. Then we need
to prove that
$$\|\nabla u\|_{L^2([0,1];L^{\infty}(M))}\lesssim \|\Lambda^{s}f\|_{H^1(M)}$$
is true. Again we write it as
\begin{equation}\label{E:derivativeest}
\|\nabla u_k\|_{L^2([0,1];L^{\infty}(M))}\lesssim \|\Lambda^{s}u_k\|_{H^1(M)}
\end{equation}
 for
denoting that it's frequency being localized to $\Lambda=2^k$. By making use of
the following inequality
\begin{equation}\label{E:gradientest}
\|\nabla u_k\|_{L^2([0,1];L^{\infty}(M))}\lesssim \Lambda
\| u_k\|_{L^2([0,1];L^{\infty}(M))}
\end{equation}
and estimate~\eqref{E:goalone} we conclude the result.

To see~\eqref{E:gradientest} is true, we will use an argument concerning finite speed of
propagation of wave equation
(see for example~\cite{Sog02},~\cite{Xu04} ) and  the following gradient estimate of unit band
spectral projection operator.  The unit band spectral projection operator is defined as
$$\chi_{\lambda}f(x)=\sum_{\lambda\leq\lambda_k<\lambda+1}E_kf(x)
=\sum_{\lambda\leq\lambda_k<\lambda+1}e_k(x)\int_{M}f(y)e_k(y)dy$$

\begin{thm}[~\cite{Xu04}~Theorem 1]
Fix a compact Riemannian manifold $(M,g)$ with boundary and ${\rm dim}M=n$, for both Dirichlet Laplacian and
Neumann Laplacian
on $M$, there is a uniform constant $C$ such that
\begin{equation}\label{E:gradientofXu}
\|\nabla\chi_{\lambda}f\|_{L^{\infty}(M)}\leq C \lambda^{(n+1)/2}\|f\|_{L^2(M)}
\end{equation}
\end{thm}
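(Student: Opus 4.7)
The plan is to follow the classical Sogge-style approach to spectral cluster bounds, upgraded to account for the gradient and for the presence of a boundary. First I would pick a Schwartz function $\rho$ on $\mathbb{R}$ with $\rho(0)=1$, $\rho(\tau)\ge c>0$ on $[-1,1]$, and whose Fourier transform $\hat\rho$ is supported in a small interval $(-\delta,\delta)$, with $\delta$ chosen strictly less than the injectivity radius and less than the distance-to-boundary scale at which reflections start mattering. Then
\begin{equation*}
\|\nabla\chi_\lambda f\|_{L^\infty}\;\lesssim\;\bigl\|\nabla\,\rho(\sqrt{-\Delta}-\lambda)f\bigr\|_{L^\infty},
\end{equation*}
and by Fourier inversion
\begin{equation*}
\rho(\sqrt{-\Delta}-\lambda)=\frac{1}{2\pi}\int \hat\rho(t)\,e^{it\lambda}\,e^{-it\sqrt{-\Delta}}\,dt,
\end{equation*}
so the kernel of $\nabla\rho(\sqrt{-\Delta}-\lambda)$ is an oscillatory superposition (in $t$) of $\nabla_x K(t,x,y)$, where $K(t,x,y)$ is the half-wave kernel. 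Bounding $\|\nabla\chi_\lambda\|_{L^2\to L^\infty}$ by $\lambda^{(n+1)/2}$ is the same, by $TT^\ast$ duality, as bounding the diagonal value of the kernel of $\nabla_x\rho(\sqrt{-\Delta}-\lambda)^2\nabla_y^\ast$, which reduces to a pointwise estimate on the kernel above.

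Next I would estimate that kernel using a parametrix. Thanks to finite speed of propagation and the small support of $\hat\rho$, the $t$-integral only sees points $y$ with $d(x,y)\le\delta$. If $x$ is at distance $>\delta$ from $\partial M$, the boundary is invisible and one uses the Hadamard/interior FIO parametrix for $e^{-it\sqrt{-\Delta}}$; $K(t,x,y)$ is then a Lagrangian distribution of order $(n-1)/2$, so $\nabla_x K$ contributes one extra power of the frequency. Performing stationary phase in $t$ against $e^{it\lambda}$ gives the familiar Sogge bound $|K_{\rho}(x,y)|\lesssim \lambda^{(n-1)/2}(1+\lambda d(x,y))^{-(n-1)/2}$, and the gradient costs an additional factor $\lambda$, producing the $\lambda^{(n+1)/2}$ on the diagonal and therefore the claimed $L^2\to L^\infty$ bound.

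The core difficulty, and the reason Xu's theorem is delicate, is what happens when $x$ lies within distance $\delta$ of $\partial M$. In that regime the short-time wave kernel sees reflections off the boundary, and one must either replace the free parametrix by a Melrose--Taylor style parametrix incorporating reflected rays (for Dirichlet/Neumann conditions), or use the symmetric/antisymmetric extension across $\partial M$ (as is done earlier in the excerpt for the Schr\"odinger flow) to reduce to a half-wave equation with Lipschitz coefficients on a manifold without boundary. In the latter approach the kernel splits into a direct piece, handled exactly as in the interior case, and a reflected piece where the distance $d(x,y)$ in the phase is replaced by $d(x,y^\ast)$, $y^\ast$ being the reflected image point; the same stationary-phase argument still yields the bound $\lambda^{(n+1)/2}$ for the gradient on the diagonal.

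I expect the reflection/boundary piece to be the principal obstacle: proving that the Dirichlet (respectively Neumann) boundary condition does not worsen the decay of $\nabla_x K(t,x,y)$ near glancing directions, where rays graze $\partial M$, requires care. Once one controls the reflected kernel (either via the parametrix of \cite{BSS07} used earlier, or via direct extension arguments), assembling the pieces and summing the dyadic contributions recovers the stated $\lambda^{(n+1)/2}$ bound for both Dirichlet and Neumann Laplacians.
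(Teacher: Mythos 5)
The paper does not prove this statement; it is quoted as Theorem 1 of Xu's thesis \cite{Xu04} and used as a black box, so there is no ``paper's own proof'' to compare with. Your general strategy --- pass from $\chi_\lambda$ to a Schwartz approximate projector $\rho(\sqrt{-\Delta}-\lambda)$, write it via the half-wave (or cosine) transform, use finite speed of propagation to localize in time, run a short-time Hadamard/FIO parametrix in the interior, and treat a boundary collar separately --- is indeed the Sogge-style route that \cite{Sog02} and \cite{Xu04} follow, so the broad plan is sound.

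There is, however, a concrete slip in the quantitative step that makes the conclusion not follow as written. The kernel bound you quote, $|K_\rho(x,y)|\lesssim\lambda^{(n-1)/2}(1+\lambda d(x,y))^{-(n-1)/2}$, is off by a factor of $\lambda^{(n-1)/2}$: the correct Sogge estimate for the kernel of $\rho(\sqrt{-\Delta}-\lambda)$ is $|K_\rho(x,y)|\lesssim\lambda^{n-1}(1+\lambda d(x,y))^{-(n-1)/2}$, so the diagonal value is $\sim\lambda^{n-1}$, not $\lambda^{(n-1)/2}$. More importantly, a pointwise bound $\lambda^{(n+1)/2}$ on the diagonal of $\nabla_xK_\rho$ would yield an $L^1\to L^\infty$ estimate, not the claimed $L^2\to L^\infty$ one. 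The correct reduction is via $TT^\ast$: one must show that the kernel of $\nabla\rho(\sqrt{-\Delta}-\lambda)\,\rho(\sqrt{-\Delta}-\lambda)^\ast\nabla^\ast$ is $\lesssim\lambda^{n+1}$ on the diagonal, which is the same as the pointwise ``gradient Weyl'' bound $\sup_x\sum_{\lambda_k}\rho(\lambda_k-\lambda)^2|\nabla e_k(x)|^2\lesssim\lambda^{n+1}$, and only then does taking a square root give $\lambda^{(n+1)/2}$. Your sketch needs to be rearranged so that this $\nabla_x\nabla_y K(x,y)|_{x=y}\lesssim\lambda^{n+1}$ bound is the target of the parametrix/stationary-phase analysis. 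One further caution on the boundary step: the even extension gives eigenfunctions that are only $C^{1,1}$ and a metric that is only Lipschitz across $\partial M$, and the geometric-optics FIO parametrix you invoke is not directly available at that regularity; for the $L^\infty$ (as opposed to $L^p$) cluster bound the workaround of \cite{Sog02} and \cite{Xu04} is to exploit finite propagation speed to restrict to extremely short times $|t|\lesssim\lambda^{-1}$ and use only crude pointwise kernel estimates there rather than full stationary phase, which is the part of the argument you would have to supply to make this rigorous near $\partial M$.
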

In fact, we are going to use it's dual form , that is
\begin{equation}\label{E:gradientofXudual}
\|\chi_{\l}\nabla f\|_{L^{2}(M)}\leq C \l^{(n+1)/2}\|f\|_{L^1(M)}
\end{equation}

Let $\{\beta_j\}_{j\geq 0}$ be a Littlewood-Paley partition on $\mathbb{R}$.
Since Littlewood -Paley
operator commutes with Schrodinger operator, estimate~\eqref{E:gradientest}
will be a consequence of
\begin{equation}\label{E:gradientest2}
\|\nabla \beta_k(D) f\|_{L^{\infty}(M)}\lesssim \lambda\|f\|_{L^{\infty}(M)}
\end{equation}
where $2^k=\lambda$ and $f$ is spectrally localized to on dyadic interval of order $\lambda$.
However we should prove the following dual inequality
\begin{equation}\label{E:gradientest3}
\| \beta_k(D)\nabla f\|_{L^{1}(M)}\lesssim \l\|f\|_{L^{1}(M)},
\end{equation}
 since this implies~\eqref{E:gradientest2}.\par

Recall that $\beta_j(\cdot)=\beta(\frac{\cdot}{2^j})\;,\;j\geq 1$ for some $\beta\in C^{\infty}_{0}(1/2,4)$.
We may assume it is an even function on $\mathbb{R}$, otherwise we only need replace $\beta(t)$ by
$\tilde{\beta(t)}$ where the even function $\tilde{\beta(t)}=\beta(t)$ for $t>0$.
Write
$$\beta(\frac{P}{\lambda})\nabla f(x)
=\frac{1}{2\pi}\int_{\mathbb{R}}\lambda\widehat{\beta}(\lambda t) e^{itP}\nabla f(x)dt.$$
Note that proving~\eqref{E:gradientest3} is equivalent to considering
$$T_{\lambda}(P)f(x)=\int_{\mathbb{R}}\lambda\widehat{\beta}(\lambda t)\cos{tP}\nabla f(x)dt,$$
and proving
\begin{equation}\label{E:estimateofT}
\|T_{\lambda}(P)f\|_{L^{1}(M)}\lesssim\lambda\|f\|_{L^{1}(M)}
\end{equation}
Here $P=\sqrt{-\triangle}$ and
$$\cos tP\nabla f(x)=\sum_{k=1}^{\infty} \cos t\lambda_kE_k(\nabla f)(x)=u(t,x)$$
is the cosine transform of $\nabla f$.
It is the solution of wave equation
$$(\partial_t^2-\triangle_g)u=0\;,\;u(0,\cdot)=\nabla f\;,\;u_t(0,\cdot)=0.$$

In order to prove~\eqref{E:estimateofT} , we shall use the finite propagation speed for
solutions to the wave equation. Specifically, if
$\nabla f$ is supported in a geodesic ball $B(x_0,R)$ centered at $x_0$ with radius $R$, then $x\To \cos tP\nabla f$
vanishes outside of $B(x_0, 2R)$ if $0\leq t\leq R$.\par

Let $1=\eta(t)+\sum_{j=1}^{\infty}\rho(2^{-j}t)$ be a Littlewood-Paley partition of $\mathbb{R}$. Write
$T_{\l}=T_{\l}^0+T_{\l}^j$, here
\begin{equation}\label{E:Tzero}
T_{\lambda}^0(P) f=\int_{\mathbb{R}}\eta(\lambda t)\lambda\widehat{\beta}(\lambda t)\cos{tP}\nabla fdt
\end{equation} and
\begin{equation}\label{E:Tj}T_{\lambda}^j(P) f=\int_{\mathbb{R}}\rho(2^{-j}\lambda t)
\lambda\widehat{\beta}(\lambda t)\cos{tP}\nabla fdt
\end{equation}
We will prove $T_{\lambda}(P)$ satisfies~\eqref{E:estimateofT} by showing $T_{\lambda}^0(P)$ and
$\sum_{j\geq1}T_{\lambda}^j(P)$ both satisfy~\eqref{E:estimateofT}.\par

Now
$$
\begin{aligned}
T_{\lambda}^0(P) f(x)&=\int_{\mathbb{R}}\eta(\lambda t)\lambda\widehat{\beta}(\lambda t)\cos{tP}\nabla f(x)dt\\
&=\int_{\mathbb{R}}\eta(\lambda t)\lambda\widehat{\beta}(\lambda t)\sum_{\lambda\leq \lambda_k\leq 2\lambda}
 \cos t\lambda_k e_{_k}(x)\int_M e_{k}(y)\nabla f(y)dydt\\
&=\int_M\{\int_{\mathbb{R}}\eta(\lambda t)\lambda\widehat{\beta}(\lambda t)\sum_{\lambda\leq \lambda_k\leq 2\lambda}
 \cos t\lambda_k e_{k}(x) e_{k}(y)dt\}\nabla f(y)dy\\
&=\int_M K_{\lambda}^0(x,y)f(y)dy
\end{aligned}
$$
Because the finite propagation speed of the wave equation mentioned before implies that
the kernel of the operator $K_{\lambda}^0(x,y)$ must satisfy
$$K_{\lambda}^0(x,y)=0 \;\;\;\;{\rm if}\;\;\;\; {\rm dist}(x,y)>8\lambda^{-1},$$
since $\cos tP$ will have a kernel that vanishes on this set when $t$ belongs to the support of
the integral defining $K_{\lambda}^0(x,y)$. Because of this, in order to prove  $T_{\l}^0$ satisfies~\eqref{E:estimateofT}, it suffices to show that for all geodesic balls $B_{\l,0}$ with radius
$8\l^{-1}$ one has the bound
\begin{equation}\label{E:estimateofTzero}
\norm{T_{\l}^0 f}_{L^1(B_{\l,0})}\lesssim\lambda\|f\|_{L^{1}(M)},
\end{equation}
For the $L^1$ norm over $B_{\l,0}$.  Also we want to use~\eqref{E:gradientofXu}, so rewrite
$$\nabla f=\sum_{l}{\nabla f}_l=\sum_{l=\lambda}^{2\lambda-1}\chi_{l}\nabla f$$
with each $\nabla f_l$ being spectrally localized to unit band.\par

By using Cauchy-Schwartz inequality,~\eqref{E:gradientofXudual}, and orthogonality we find
\begin{equation}\label{E:gradientestmain}
\begin{split}
\norm{T_{\l}^0f}_{L^1(B_{\l,0})}&\leq 8{\l}^{-1}\norm{T_{\l}^0f}_{L^2(M)}\\
&\leq C{\l}^{-1}\{\sum_{l=\l}^{2\l}
(\sup_{l\leq\l_k<l}|\beta(\frac{\l_k}{\l})|^2)\norm{\chi_{l}\nabla f}^2_{L^2(M)}\}^{1/2}\\
&\leq C{\l}^{-1}{\l}^{1/2}{\l}^{3/2}\norm{f}_{L^1(M)}.
\end{split}
\end{equation}

Similar,
\begin{equation}\label{E:Linfiniteboundthree}
\begin{aligned}
T_{\lambda}^j f(x)&=\int_{\mathbb{R}}\rho(2^{-j}\lambda t)\lambda\widehat{\beta}(\lambda t)\cos{tP}\nabla f(x)dt\\
&=\int_M\{\int_{\mathbb{R}}\rho(2^{-j}\lambda t)\lambda\widehat{\beta}(\lambda t)
\sum_{\lambda\leq \lambda_k\leq 2\lambda} \cos t\lambda_k e_{k}(x) e_{k}(y)dt\}\nabla f(y)dy\\
&=\int_M K_{\lambda}^j(x,y)f(y)dy
\end{aligned}
\end{equation}
has the property that $K_{\lambda}^j(x,y)=0$ if
${\rm dist(x,y)}\geq 8\cdot 2^{j+1}\cdot\lambda^{-1}$.
Note that the dyadic cutoff localizes to $|t|\approx \lambda^{-1}2^j$. Hence
follows again~\eqref{E:gradientestmain} yields the bound
${2^{j+1}}{\l}^{-1}\l^{1/2}(\l t)^{-N}(\lambda^{3/2})\|f\|_{1}$ with $N$ be
a large enough positive integer. Here the term $2^{j+1}\l^{-1}$ comes
from the volume of geodesic ball $B_{\l,j}$ with radius $8\cdot 2^{j+1}\cdot\lambda^{-1}$
 , $(\lambda t)^{-N}$ from value of $\beta$.
 Thus we have
$$\|T_{\lambda}^j \|_{L^{1}(B_{\l,j})}\lesssim \lambda 2^{-jN}\|f\|_{L^{1}(M)}$$
which form a
geometric series and thus the sum of $j=1,\cdots,\infty$ terms enjoys the
property~\eqref{E:estimateofT}. \par

\section{Cubic NLS}
\subsection{Cauchy Problem}
In the following, we establish the well-posedness of the cubic
nonlinear Schr\"{o}dinger equation in 2 dimensional compact manifolds
$(M,g)$ with boundary. The equations we are interested in is following.
\begin{equation}\label{E:Cauchyproblem}
\left\{
\begin{array}{rll}
i\partial_tu+\triangle u &=& \alpha|u|^2u,\;{\rm on}\;\;\mathbb{R}\times M  \\
u|_{t=0}&=& u_0, \;{\rm on}\;\;M \\
u|_{\partial M}&=&0\;({\rm Dirichlet}),\;\;\;(\rm
or)\;\;\;\;N_x\cdot \nabla u|_{\partial M}=0\;\;({\rm Neumann})
\end{array}
\right.
\end{equation}
where $\alpha=\pm 1$.

\begin{defn}
Let $s$ be a real number. We shall say that the Cauchy problem~\eqref{E:Cauchyproblem}
is uniformly well-posed in $H^s(M)$ if,
for any bounded subset of $H^s(M)$, there exists $T>0$ such that the flow map
$$u_0\in C^{\infty}(M)\cap B\mapsto u\in C([-T,T],H^s(M))$$
is uniformly continuous when the source space is
endowed with $H^s$ norm, and when the target space is endowed with
$$\|u\|_{C_TH^s}=sup_{|t|\leq T}\|u(t)\|_{H^s(M)}$$
\end{defn}

Let's state again our local well-posdness results Theorem~\ref{T:well-posed}.
\begin{thma}
If $(M,g)$ is a 2 dimensional manifold with boundary, then the Cauchy problem for~\eqref{E:Cauchyproblem}
is uniformly well-posed in $H^s(M)$ for every $s>\frac{2}{3}$.
\end{thma}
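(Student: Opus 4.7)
The plan is to follow the Bourgain--G\'erard--Tzvetkov--Anton framework: introduce $X^{s,b}$ spaces adapted to the Schr\"odinger flow, transfer the bilinear Strichartz estimates of Theorems~\ref{T:bilinearone} and~\ref{T:bilineartwo} into these spaces, derive a trilinear estimate for $|u|^2 u$, and close a contraction mapping argument on a short time interval.

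First, for $s,b\in\mathbb R$, I would define $X^{s,b}$ as the completion of smooth space-time functions in the norm obtained from the spectral decomposition in eigenfunctions of $-\Delta_g$ together with the weight $\langle \tau+\Lambda_k^2\rangle^{2b}$ on the time Fourier side, and let $X^{s,b}_T$ denote its restriction to $[-T,T]$. The standard embeddings $X^{s,b}_T\hookrightarrow C([-T,T];H^s(M))$ for $b>1/2$, together with the linear inhomogeneous estimate for $D_t+\Delta$, are available exactly as in~\cite{BGT04, BGT05, Ant06}, and Bourgain's transfer principle promotes any space-time estimate valid for families of free solutions $e^{it\Delta}f_j$ to the corresponding estimate on $X^{0,b}$ functions with $b>1/2$.

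The core of the argument is the trilinear estimate
\begin{equation*}
\|u_1\bar u_2 u_3\|_{X^{s,-b'}_T}\lesssim T^{\theta}\prod_{j=1}^{3}\|u_j\|_{X^{s,b}_T}
\end{equation*}
for some $s>\tfrac{2}{3}$, $\tfrac12<b'<b<1$ with $b+b'<1$, and some $\theta>0$. To prove it I would decompose each $u_j$ into Littlewood--Paley pieces $u_{j,N_j}$ supported at frequencies $N_j\sim 2^{k_j}$ and, by duality, pair against a fourth factor $u_{4,N_4}$, reordering so that $N_1\ge N_2\ge N_3\ge N_4$. At least two of the four factors must carry comparable top frequencies, so I would split the four-linear form into two bilinear ones: apply Theorem~\ref{T:bilinearone} to the pair containing the highest-frequency factor, and an analogous bilinear estimate (ultimately a consequence of Theorem~\ref{T:bilinearone}) to the complementary pair. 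When a derivative must be placed on the top frequency in order to balance the $H^s$ norm, Theorem~\ref{T:bilineartwo} is used instead. Each bilinear application costs $(\min(N_i,N_j))^{s_0}$ with any $s_0>\tfrac23$, which is strictly smaller than the Sobolev weight $s>\tfrac23$ carried by a lower-frequency factor, so the sum over dyadic scales collapses to a convergent geometric series; the factor $T^{\theta}$ comes from the usual play between $b$ and $b'$.

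Once this trilinear estimate is in hand, the Cauchy problem~\eqref{E:Cauchyproblem} is solved by the Duhamel formulation truncated by a cutoff $\chi(t/T)$: the resulting map is shown to be a contraction on a ball in $X^{s,b}_T$ for $T$ small in terms of $\|u_0\|_{H^s}$, yielding existence, uniqueness, and uniform continuity of the flow map by applying the same trilinear bound to the difference of two solutions. The principal obstacle is the trilinear estimate itself, and within it the high-high to high-high frequency interaction $N_1\sim N_4\gg N_2,N_3$, where the derivative loss imposed by Theorem~\ref{T:bilineartwo} must be absorbed exactly by the Sobolev weight attached to a single lower-frequency factor; it is this configuration that rigidly fixes the threshold $s>\tfrac23$.
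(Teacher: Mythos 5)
Your overall framework is right and matches the paper: introduce $X^{s,b}$ spaces, transfer the bilinear Strichartz estimates via a Bourgain-type transference lemma, prove a trilinear estimate, and run a contraction on a short time interval. But the account of the trilinear estimate — which is the crux of the whole theorem — is missing the key mechanism and misdescribes how the gradient bilinear estimate enters.

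After dualizing $\|u_1\bar u_2 u_3\|_{X^{s,-b'}}$ against a fourth function $u_0$ and reducing to $I(\underline N)=\int u_{0N_0}u_{1N_1}u_{2N_2}u_{3N_3}$ with $N_1\ge N_2\ge N_3$, the regime that cannot be handled by a direct application of the plain bilinear estimate is $N_0\gg N_1$. There the dual weight $\|u_{0N_0}\|_{X^{-s,b'}}\sim N_0^{-s}\|u_{0N_0}\|_{X^{0,b'}}$ gives you a \emph{loss} of $N_0^{s}$ relative to the $X^{0,b'}$ scale, and the bilinear estimate only yields $(N_2N_3)^{s_0}$, which is independent of $N_0$; there is no frequency orthogonality on a general manifold to kill these terms, so the dyadic sum over $N_0$ diverges. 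Your proposal does not mention any device to gain a negative power of $N_0$ in this case, and the phrase ``place a derivative on the top frequency via Theorem~\ref{T:bilineartwo}'' does not supply one — the gradient estimate \eqref{E:bilinearStritwomine} only costs you an extra $\Lambda$, it does not produce the needed decay in $N_0$. Moreover your stated ordering $N_1\ge N_2\ge N_3\ge N_4$ with $u_4$ the dual factor silently excludes the dangerous regime $N_4\sim N_0\gg N_1$, while your ``principal obstacle'' $N_1\sim N_4\gg N_2,N_3$ contradicts that ordering.

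The missing ingredient is an integration-by-parts (Green's formula) step: write $u_{0N_0}=-N_0^{-2}\Delta\,Tu_{0N_0}$ with $T$ a spectrally bounded multiplier, use Green's identity together with the Dirichlet or Neumann boundary condition to move the Laplacian onto $u_{1N_1}u_{2N_2}u_{3N_3}$, and then expand by Leibniz. The term $\Delta u_{1N_1}\cdot u_{2N_2}u_{3N_3}$ produces $N_1^2/N_0^2$ and is handled by the plain bilinear estimate, while the cross term $(\nabla u_{1N_1})(\nabla u_{2N_2})u_{3N_3}$ is exactly where the gradient bilinear Strichartz estimate \eqref{E:bilinearStritwomine} is applied; it contributes $N_1N_2/N_0^2$. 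Both give the factor $(N_1/N_0)^{2}$ that makes the $N_0$-sum converge, and this is where the boundary conditions are genuinely used (to kill the boundary terms in Green's formula). That your sketch never invokes Green's formula — and hence never explains why $\nabla u\cdot\nabla v$ products appear — is a genuine gap, not a presentation issue. Separately, your parameter constraint $\tfrac12<b'<b<1$ with $b+b'<1$ is inconsistent; the correct window, as in the paper, is $0<b'<\tfrac12<b$ with $b+b'<1$.
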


\subsection{Bourgain Spaces}
In order to prove the local well-posedness of cubic nonlinear
Schr\"{o}dinger equation on manifolds with boundary. We introduce
Bourgain space $X^{s,b}$. Our definition follows from Burq,
G\'{e}rard and Tzvetkov ~\cite{BGT05} using the spectral projectors
on manifolds.

Let $(e_k)$ be a $L^2(M)$ orthonormal basis of eigenfunctions of
Dirichlet(or Neumann) Laplacian $-\triangle_g$ with eigenvalues
$\mu_k^2$, $E_k$ be the orthogonal projector along $e_k$. The
Sobolev space $H^s(M)$ is associated to $(I-\triangle)^{1/2}$,
equipped with the norm
$$\|u\|^2_{H^s(M)}=\sum_{k}\langle\mu_k\rangle^{2s}\|E_ku\|^2_{L^2(M)}$$
where $\langle\mu_k\rangle=(1+\mu_k^2)^{\frac{1}{2}}$.

\begin{defn}
The space $X^{s,b}(\mathbb{R}\times M)$ is the completion of
$C^{\infty}_{0}(\mathbb{R}_t;H^s(M))$ with the norm
\begin{align}\label{E:definitionBourgain}
\|u\|^2_{X^{s,b}(\mathbb{R}\times
M)}&=\sum_k\|\langle\tau+\mu_k^2\rangle^b\langle\mu_k\rangle^{s}
\widehat{E_ku}(\tau)\|^2_{L^2(\mathbb{R}_\tau;L^2(M))} \\
&=\|e^{-it\triangle}u(t,\cdot)\|^2_{H^b(R_t;H^s(M))}
\end{align} where $\widehat{E_ku}(\tau)$ denote the Fourier
transform of $E_ku$ with respect to the time variable.
\end{defn}

In fact, if $s\geq 0$ and $u\in \mathcal{S}'(\mathbb{R},L^2(M))$.
Let $F(t,\cdot)=e^{-it\triangle}u(t,\cdot)$, then $F(t,\cdot)\in
\mathcal{S}'(\mathbb{R},L^2(M))$ and
$E_k(F(t,\cdot))=e^{it\mu_k^2}E_k(u(t,\cdot))$. Hence
$\widehat{E_k(F)}(\tau)=\widehat{E_k(u)}(\tau-\mu_k^2)$. Applies this to~\eqref{E:definitionBourgain} ,
 we conclude
$$\|u\|^2_{X^{s,b}(\mathbb{R}\times M)}=\|e^{-it\triangle}u(t,\cdot)\|^2_{H^b(R_t;H^s(M))}.$$

We also note that if $b>\frac{1}{2}\;,\;H^{b}(\mathbb{R},H^s(M))\hookrightarrow C(\mathbb{R},H^s(M))$,
since $u(t,\cdot)=e^{it\triangle}F(t,\cdot)$, we have $u\in C(\mathbb{R}, H^s(M))$.

In order to use a contraction mapping argument to obtain local
existence. We need to define local in time version of
$X^{s,b}(\mathbb{R}\times M)$. For $T>0$ we denoted by
$X^{s,b}_T(M)$ the space of restrictions of elements of
$X^{s,b}(\mathbb{R}\times M)$ endowed with the norm
$$\|u\|_{X^{s,b}_T}=\inf\{\|\tilde{u}\|_{X^{s,b}(\mathbb{R}\times M)}\;,\;\tilde u|_{(-T,T)\times M}=u\}$$

Now we can reformulate the bilinear estimates in the $X^{s,b}$
content. The following lemma should refer to the lemma 2.3 of
~\cite{BGT05}.

\begin{lem}
Let $s\in\mathbb{R}$. The following statements are equivalent:

\noindent(1) For any $u_0\;,\;v_0\in L^2(M)$ satisfying
$$1_{\lambda\leq\sqrt{-\triangle}\leq2\lambda}u_0=u_0\;\;\;,\;\;\;1_{\mu\leq\sqrt{-\triangle}\leq2\mu}v_0=v_0$$
one has
\begin{equation}\label{E:bilinearStrioneone}
\|e^{it\triangle}u_0\;e^{it\triangle}v_0\|_{L^2((0,1)_t\times
M)}\leq C({\rm min}(\lambda,\mu))^s
\|u_0\|_{L^2(M)}\|v_0\|_{L^2(M)}
\end{equation}
(2)For any $b>\frac{1}{2}$ and any $f,g\in
X^{0,b}(\mathbb{R}\times M)$ satisfying
$$1_{\lambda\leq\sqrt{-\triangle}\leq2\lambda}f=f\;\;\;,\;\;\;1_{\mu\leq\sqrt{-\triangle}\leq2\mu}g=g$$
one has
\begin{equation}\label{E:bilinearBourgain}
\|fg\|_{L^2(\mathbb{R}\times M)}\leq
C({\rm min}(\lambda,\mu))^s\|f\|_{X^{0,b}(\mathbb{R}\times M)}
\|g\|_{X^{0,b}(\mathbb{R}\times M)}
\end{equation}
\end{lem}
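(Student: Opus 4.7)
Proof Plan.

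The lemma is a standard transfer principle between bilinear Strichartz estimates for the free Schr\"odinger propagator and bilinear estimates in the Bourgain space $X^{0,b}$. I would establish the two implications separately, with (2)$\Rightarrow$(1) following from a time-cutoff construction and (1)$\Rightarrow$(2) following from a Fourier representation of $X^{0,b}$ elements.

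For the direction (2)$\Rightarrow$(1), fix a cutoff $\chi\in C^\infty_c(\mathbb{R})$ with $\chi\equiv 1$ on $[0,1]$. Given $u_0, v_0$ spectrally localized as in (1), set $f(t,x) := \chi(t)(e^{it\triangle}u_0)(x)$ and $g(t,x) := \chi(t)(e^{it\triangle}v_0)(x)$. Multiplication by $\chi(t)$ commutes with the projectors $E_k$, so $f, g$ inherit the spectral localization of $u_0, v_0$. The identity $e^{-it\triangle}f = \chi(t)u_0(x)$ gives $\|f\|_{X^{0,b}} = \|\chi\|_{H^b(\mathbb{R})}\|u_0\|_{L^2(M)}$ and similarly for $g$. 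Since $\chi\equiv 1$ on $[0,1]$,
\begin{equation*}
\|e^{it\triangle}u_0\,e^{it\triangle}v_0\|_{L^2([0,1]\times M)}=\|fg\|_{L^2([0,1]\times M)}\leq \|fg\|_{L^2(\mathbb{R}\times M)},
\end{equation*}
and invoking (2) produces (1) up to the harmless constant $C\|\chi\|^2_{H^b}$.

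For the direction (1)$\Rightarrow$(2), use the Fourier representation of $f\in X^{0,b}$. Setting $F(t,x) = e^{-it\triangle}f(t,x)\in H^b_t(L^2_x)$ and performing Fourier inversion in $t$ yields
\begin{equation*}
f(t,x) = \frac{1}{2\pi}\int_{\mathbb{R}} e^{it\tau}\bigl(e^{it\triangle}\tilde F(\tau,\cdot)\bigr)(x)\,d\tau,
\end{equation*}
where $\tilde F(\tau,x)$ denotes the $t$-Fourier transform of $F$. The $t$-Fourier transform and $e^{it\triangle}$ both commute with the spectral projectors in $x$, so each $\tilde F(\tau,\cdot)$ is spectrally localized in $[\lambda,2\lambda]$, and Plancherel gives $\int\langle\tau\rangle^{2b}\|\tilde F(\tau)\|^2_{L^2(M)}\,d\tau = \|f\|^2_{X^{0,b}}$. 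Writing the analogous representation for $g$ with profiles $\tilde G(\sigma,\cdot)$ localized in $[\mu,2\mu]$, the product becomes
\begin{equation*}
fg(t,x) = \frac{1}{(2\pi)^2}\iint e^{it(\tau+\sigma)}\bigl(e^{it\triangle}\tilde F(\tau)\bigr)(x)\bigl(e^{it\triangle}\tilde G(\sigma)\bigr)(x)\,d\tau\,d\sigma.
\end{equation*}
For each fixed $(\tau,\sigma)$ the inner bilinear product of free solutions is controlled on each unit time interval by hypothesis (1) (uniformly, by time-translation invariance of the free flow). Cauchy--Schwarz in $(\tau,\sigma)$ against the integrable weight $\langle\tau\rangle^{-2b}\langle\sigma\rangle^{-2b}$---integrable precisely because $b>1/2$---then packages the $(\tau,\sigma)$-integral into the $X^{0,b}$ norms of $f$ and $g$, with the prefactor $(\min(\lambda,\mu))^s$ coming directly from (1).

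The main technical obstacle is passing from the bounded-time estimate (1) to the full $L^2(\mathbb{R}\times M)$ norm on the left of (2), since a naive partition of $\mathbb{R}$ into unit time intervals summed crudely would diverge. The resolution is the standard $X^{0,b}$ transfer mechanism: one combines Plancherel in $t$ with the weighted Fourier representation above, so that the $\langle\tau\rangle^b$ weight in the norm (with $b>1/2$ crucial for summability) effectively trades time-integrability for frequency-integrability and controls the sum of contributions over all unit intervals. This is the argument of Lemma 2.3 in~\cite{BGT05}, whose scheme I would simply follow.
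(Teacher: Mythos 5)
Your (2)$\Rightarrow$(1) direction matches the paper exactly: cutoff in time, note the cutoff of the free flow lies in $X^{0,b}$ with norm $\lesssim\|u_0\|_{L^2}$, done. Your Fourier-representation-plus-Cauchy--Schwarz scheme for (1)$\Rightarrow$(2) is also the paper's skeleton. But there is a genuine gap in how you handle the passage from the $(0,1)$-time estimate to the full $L^2(\mathbb{R}\times M)$ norm.

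The issue: after the Fourier representation and pulling the norm through the $(\tau,\sigma)$-integral, the quantity inside is $\|e^{it\triangle}\widehat F(\tau)\,e^{it\triangle}\widehat G(\sigma)\|_{L^2(\mathbb{R}_t\times M)}$. On a compact manifold the free flow is almost periodic, so this product of free solutions has no decay in $t$ and its $L^2(\mathbb{R}_t)$ norm is infinite. Hypothesis (1) controls it only on unit time intervals, and no amount of $\langle\tau\rangle^b$ weighting fixes this, because those weights act on the modulation variable $\tau$, not on the unbounded $t$-integration. Your proposed "resolution" --- that the $\langle\tau\rangle^b$ weight trades time-integrability for frequency-integrability and "controls the sum of contributions over all unit intervals" --- is not a mechanism that closes the estimate.

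What the paper actually does (following Lemma 2.3 of~\cite{BGT05}) is the reverse of your instinct: it does \emph{not} reject a partition in time. It first assumes $f,g$ are supported in time in $(0,1)$, so that $\|fg\|_{L^2(\mathbb{R}\times M)}=\|fg\|_{L^2((0,1)\times M)}$ and hypothesis (1) applies directly after the Fourier representation and Cauchy--Schwarz. The general $f,g$ are then handled precisely by the partition-of-unity argument you dismissed as divergent: writing $f=\sum_n\psi(t-n/2)f$ with $\psi$ supported in $(0,1)$, only finitely many pieces overlap at each $t$, and the sum of squared $X^{0,b}$ norms of the pieces is controlled by $\|f\|^2_{X^{0,b}}$ (an almost-orthogonality fact for $H^b_t$ with $b$ in the relevant range). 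You should invert the logic: restrict to compact time support first, estimate via Fourier inversion, then sum the partition --- it converges.
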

\begin{proof}
If $u(t)=e^{-it\triangle}u_0$ then for any $\psi\in C^{\infty}_{0}(\mathbb{R})$ and any $b\;,\;\psi(t)u(t)\in
X^{0,b}(\mathbb{R}_t\times M)$ with $$\|\psi u\|_{X^{0,b}(\mathbb{R}\times M)}\leq C\|u_0\|_{L^2(M)}$$ which
shows that~\eqref{E:bilinearBourgain} implies~\eqref{E:bilinearStrioneone}.

Suppose that $f(t)$ and $g(t)$ are supported in time in the interval $(0,1)$ and write
$$f(t)=e^{it\triangle}e^{-it\triangle}f(t)=e^{it\triangle}F(t)\;,\;g(t)=e^{it\triangle}e^{-it\triangle}g(t)
=e^{it\triangle}G(t)$$
Then
$$f(t)=\frac{1}{2\pi}\int_{-\infty}^{\infty}e^{it\tau}e^{it\triangle}\widehat{F}
(\tau)d\tau\;,\;g(t)=\frac{1}{2\pi}\int_{-\infty}^{\infty}e^{it\tau}e^{it\triangle}\widehat{G}(\tau)d\tau$$
and hence
$$(fg)(t)=\frac{1}{(2\pi)^2}\int_{-\infty}^{\infty}\int_{-\infty}^{\infty}e^{it(\tau+\sigma)}e^{it\triangle}
\widehat{F}(\tau)e^{it\triangle}\widehat{G}(\sigma)d\tau d\sigma.$$

Ignoring the oscillating factors
$e^{it(\tau+\sigma)}$, using~\eqref{E:bilinearStrioneone} and the Cauchy-Schwartz inequality
in $(\tau,\sigma)$ (in this places we use
that $b>\frac{1}{2}$ to get the needed integrability) yields
\begin{align}
\|fg\|_{L^2((0,1)\times M)}&\leq C({\rm min}(\lambda,\mu))^s\int_{\tau,\sigma}\|\widehat{F}(\tau)\|_{L^2(M)}
\|\widehat{G}(\sigma)\|_{L^2(M)}d\tau d\sigma \notag\\
&\leq C({\rm min}(\lambda,\mu))^s\|\langle\tau\rangle^b\widehat{F}(\tau)\|_{L^2(\mathbb{R}_\tau\times M)}
\|\langle\sigma\rangle^b\widehat{G}(\sigma)\|_{L^2(\mathbb{R}_\sigma\times M)}\\
&=C({\rm min}(\lambda,\mu))^s\|f\|_{X^{0,b}(\mathbb{R}\times
M)}\|g\|_{X^{0,b}(\mathbb{R}\times M)} \notag
\end{align} Finally, by decomposing $f(t)=\sum_{n\in\mathbb{Z}}\psi(t-\frac{n}{2})f(t)$ and $g(t)=
\sum_{n\in\mathbb{Z}}\psi(t-\frac{n}{2})g(t)$ with a suitable $\psi\in C^{\infty}_{0}(\mathbb{R})$
supported in (0,1), the general case for $f(t)$ and $g(t)$ follows from the considered particular case of
$f(t)$ and $g(t)$ supported in time in the interval $(0,1)$. Thus~\eqref{E:bilinearStrioneone}
 implies~\eqref{E:bilinearBourgain}.
\end{proof}

A similar proof for the gradient bilinear estimates should refer
to Anton~\cite{Ant06}.

\begin{lem}
Let $s\in\mathbb{R}$. The following statements are equivalent:

\noindent(1) For any $u_0\;,\;v_0\in L^2(M)$ satisfying
$$1_{\lambda\leq\sqrt{-\triangle}\leq2\lambda}u_0=u_0\;\;\;,
\;\;\;1_{\mu\leq\sqrt{-\triangle}\leq2\mu}v_0=v_0$$
one has
\begin{equation}\label{E:bilinearStritwotwo}
\|(\nabla
e^{it\triangle}u_0)\;e^{it\triangle}v_0\|_{L^2((0,1)_t\times
M)}\leq C\lambda({\rm min}(\lambda,\mu))^s
\|u_0\|_{L^2(M)}\|v_0\|_{L^2(M)}\end{equation}

\noindent(2)For any $b>\frac{1}{2}$ and any $f,g\in
X^{0,b}(\mathbb{R}\times M)$ satisfying
$$1_{\lambda\leq\sqrt{-\triangle}\leq2\lambda}f=f\;\;\;,
\;\;\;1_{\mu\leq\sqrt{-\triangle}\leq2\mu}g=g$$
one has
\begin{equation}\label{E:bilinearBourgaintwo}
\|(\nabla f)g\|_{L^2(\mathbb{R}\times
M)}\leq C\lambda({\rm
min}(\lambda,\mu))^s\|f\|_{X^{0,b}(\mathbb{R}\times M)}
\|g\|_{X^{0,b}(\mathbb{R}\times M)}
\end{equation}
\end{lem}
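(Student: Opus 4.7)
The plan is to mirror the proof of the previous lemma almost verbatim, since the only new ingredient is an extra gradient on the first factor which commutes with all the manipulations. The equivalence is between a Schrödinger flow statement and its $X^{0,b}$-lift, so the two directions are handled separately and by standard devices.

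The easy direction is that (2) implies (1): if $u_0 \in L^2(M)$ is spectrally localized to the annulus $\{\lambda \le \sqrt{-\Delta} \le 2\lambda\}$, then for any fixed $\psi \in C_0^\infty(\mathbb{R})$ the function $\psi(t)e^{it\Delta}u_0$ belongs to $X^{0,b}$ with norm $\lesssim \|u_0\|_{L^2(M)}$, and it is still spectrally localized to the same annulus because $1_{\lambda \le \sqrt{-\Delta} \le 2\lambda}$ commutes with the time Fourier transform and with $e^{it\Delta}$. Choosing $\psi \equiv 1$ on $(0,1)$ and applying (2) to $f = \psi e^{it\Delta}u_0$ and $g = \psi e^{it\Delta}v_0$ recovers \eqref{E:bilinearStritwotwo}.

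For the reverse direction, I first reduce to the case that $f(t)$ and $g(t)$ are supported in $(0,1)$ by writing $f = \sum_n \psi(t-n/2)f$ and likewise for $g$, with a suitable $\psi \in C_0^\infty((0,1))$; this is exactly the reduction at the end of the previous lemma's proof. Under that assumption, set $F(t) = e^{-it\Delta}f(t)$ and $G(t) = e^{-it\Delta}g(t)$, so that
\begin{equation*}
f(t) = \frac{1}{2\pi}\int_{\Real} e^{it\tau}e^{it\Delta}\widehat{F}(\tau)\,d\tau, \qquad g(t) = \frac{1}{2\pi}\int_{\Real} e^{it\sigma}e^{it\Delta}\widehat{G}(\sigma)\,d\sigma.
\end{equation*}
Applying $\nabla$ under the integral, multiplying, and ignoring the scalar phase $e^{it(\tau+\sigma)}$, the triangle inequality in $\tau,\sigma$ together with \eqref{E:bilinearStritwotwo} applied to $\widehat{F}(\tau)$ and $\widehat{G}(\sigma)$ (which remain spectrally localized to the same dyadic annuli because the spectral projectors commute with the time Fourier transform) yields
\begin{equation*}
\|(\nabla f)g\|_{L^2((0,1)\times M)} \le C\lambda(\min(\lambda,\mu))^s \int_{\Real}\int_{\Real} \|\widehat{F}(\tau)\|_{L^2(M)} \|\widehat{G}(\sigma)\|_{L^2(M)}\, d\tau\, d\sigma.
\end{equation*}
Now use Cauchy--Schwarz separately in $\tau$ and $\sigma$ with weights $\lr{\tau}^{-b}\lr{\tau}^b$ and $\lr{\sigma}^{-b}\lr{\sigma}^b$, where the hypothesis $b>1/2$ ensures $\lr{\cdot}^{-b} \in L^2(\Real)$. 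This produces the factors $\|\lr{\tau}^b\widehat{F}\|_{L^2_\tau L^2_x} = \|f\|_{X^{0,b}}$ and similarly for $g$, giving \eqref{E:bilinearBourgaintwo}.

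There is no substantive obstacle here beyond bookkeeping: the gradient is a purely spatial operator and slides through the time Fourier integral without cost, and the preservation of spectral localization under time Fourier transform and under $e^{it\Delta}$ is immediate from the definitions. The only mildly delicate point is verifying that the time-cutoff decomposition at the end does not lose the $X^{0,b}$ control after summing in $n$, but this is the same argument as in the scalar case and uses that multiplication by $\psi(t-n/2)$ is bounded on $X^{0,b}$ uniformly in $n$ for $b>1/2$.
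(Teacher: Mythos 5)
Your proposal is correct and takes essentially the same route the paper intends: the paper does not write out a proof of this gradient version, instead stating that it follows by the same argument as the preceding (non-gradient) lemma (and citing Anton), and your argument is precisely that adaptation, with the purely spatial $\nabla$ sliding harmlessly through the time Fourier integral. The one minor presentational difference is that you front-load the reduction to time support in $(0,1)$, whereas the paper's Lemma 5.3 proof defers it to the end; this is immaterial.
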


Denote by $S(t)=e^{it\triangle}$ the free evolution. Using the
Duhamel formula , we know that to solve~\eqref{E:Cauchyproblem} is equivalent to solve the
integral equation
$$u(t)=S(t)u_0-i\alpha\int_{0}^tS(t-\tau)\{|u(\tau)|^2u(\tau)\}d\tau$$

To deal with it , we need the following lemmas:
\begin{lem}\label{L:localexistenceone}
Let $b\;,\;s>0$ and let $u_0\in H^s(M)$. Then
\begin{equation}\label{E:linearBourgain}
\|S(t)u_0\|_{X^{s,b}_T}\lesssim
T^{\frac{1}{2}-b}\|u_0\|_{H^s}
\end{equation}
\end{lem}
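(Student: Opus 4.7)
The plan is to estimate the local-in-time Bourgain norm by producing a concrete extension of $S(t)u_0$ from $(-T,T)$ to all of $\mathbb{R}$ whose global $X^{s,b}$ norm can be computed directly. Fix once and for all a cutoff $\psi\in C_0^\infty(\mathbb{R})$ with $\psi\equiv 1$ on $[-1,1]$, and for $0<T\le 1$ set
$$\tilde u(t,x)=\psi(t/T)\,S(t)u_0(x).$$
Since $\tilde u|_{(-T,T)\times M}=S(t)u_0$, by definition of the restriction norm it is enough to bound $\|\tilde u\|_{X^{s,b}(\mathbb{R}\times M)}$ by $T^{1/2-b}\|u_0\|_{H^s}$.

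The key step is to exploit the characterization $\|v\|_{X^{s,b}}=\|e^{-it\triangle}v(t,\cdot)\|_{H^b(\mathbb{R}_t;H^s(M))}$ recorded right after the definition of $X^{s,b}$. Because $\psi(t/T)$ is a scalar function of $t$ only, the operator $e^{-it\triangle}$ commutes with it, and the two Schr\"odinger evolutions cancel:
$$e^{-it\triangle}\tilde u(t,\cdot)=\psi(t/T)\,u_0.$$
The $t$ and $x$ variables have now separated completely, so
$$\|\tilde u\|_{X^{s,b}}=\|\psi(t/T)\|_{H^b(\mathbb{R}_t)}\,\|u_0\|_{H^s(M)}.$$

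It therefore remains to verify the elementary scaling bound $\|\psi(\cdot/T)\|_{H^b(\mathbb{R})}\lesssim T^{1/2-b}$. On the Fourier side $\widehat{\psi(\cdot/T)}(\tau)=T\,\widehat\psi(T\tau)$, and the substitution $\sigma=T\tau$ yields
$$\|\psi(\cdot/T)\|_{H^b(\mathbb{R})}^2=T\int_{\mathbb{R}}\bigl(1+\sigma^2/T^2\bigr)^b\,|\widehat\psi(\sigma)|^2\,d\sigma.$$
Splitting the integral into $|\sigma|\le T$ and $|\sigma|>T$ and using the Schwartz decay of $\widehat\psi$ bounds the right-hand side by a constant times $T^{1-2b}$, with the constant depending only on $\psi$ and $b$. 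Combining this with the previous display gives the lemma.

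I do not expect any genuine obstacle here: the argument uses only the equivalent definition of $X^{s,b}$ and a one-line scaling on $\mathbb{R}$. The only care needed is to multiply by the temporal cutoff \emph{after} applying $S(t)$, so that $e^{-it\triangle}$ kills the evolution and the problem decouples into a pure time factor times $\|u_0\|_{H^s}$.
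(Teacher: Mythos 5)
Your proof is correct and takes essentially the same route as the paper's: truncate $S(t)u_0$ with a cutoff in time, use the characterization $\|v\|_{X^{s,b}}=\|e^{-it\triangle}v\|_{H^b(\mathbb{R}_t;H^s(M))}$ to factor out the Schr\"odinger flow, and reduce to a one-dimensional $H^b$ bound on the cutoff. Your version is actually slightly more careful, since you make the $T$-dependence of the cutoff explicit as $\psi(t/T)$ and carry out the scaling computation, whereas the paper's one-line chain leaves this implicit.
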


\begin{lem}\label{L:localexistencetwo}
Let $0<b'<\frac{1}{2}$ and $0<b<1-b'$. Then for all $F\in
X^{s,-b'}_T(M)$,
\begin{equation}\label{E:nohomegenousBourgain}
\|\int_0^tS(t-\tau)F(\tau)d\tau\|_{X^{s,b}_T(M)}\lesssim T^{1-b-b'}\|F\|_{X^{s,-b'}_T(M)}
\end{equation}
\end{lem}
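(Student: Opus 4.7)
The plan is to reduce the claim to a one dimensional estimate in the time variable and then handle that estimate by splitting the Duhamel kernel according to the size of the time frequency.

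First, I would remove the spatial regularity. Since $S(t)$ commutes with the spectral projectors $E_k$ and with $\lr{D_p}^s$, and since the $X^{s,b}$ norm is exactly $\|\lr{D_p}^s \cdot\|_{X^{0,b}}$, it suffices to prove the estimate for $s=0$. I would also replace the $X^{s,b}_T$ norms by their full-space counterparts: pick an extension $\tilde F \in X^{0,-b'}(\mathbb{R}\times M)$ of $F$ with norm comparable to $\|F\|_{X^{0,-b'}_T}$, multiply the Duhamel integral by a smooth time cutoff $\psi(t/T)$ with $\psi \equiv 1$ on $[-1,1]$ and supported in $[-2,2]$, and estimate the resulting full-space object.

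Next I would use the isometry $u(t) \mapsto e^{-it\Delta}u(t)$, which identifies $X^{0,b}(\mathbb{R}\times M)$ with $H^b(\mathbb{R}_t; L^2(M))$. Setting $G(\tau) = e^{-i\tau\Delta}\tilde F(\tau)$, one has
\begin{equation*}
e^{-it\Delta}\int_0^t S(t-\tau)\tilde F(\tau)\,d\tau = \int_0^t G(\tau)\,d\tau,
\end{equation*}
and $\|G\|_{H^{-b'}_tL^2_x} \approx \|\tilde F\|_{X^{0,-b'}}$. So the claim reduces to showing, for each fixed $x$-frequency mode (equivalently, in the Hilbert space $L^2(M)$), the scalar inequality
\begin{equation*}
\Bigl\| \psi(t/T) \int_0^t G(\tau)\,d\tau \Bigr\|_{H^b_t(\mathbb{R})} \lesssim T^{1-b-b'}\,\|G\|_{H^{-b'}_t(\mathbb{R})},
\end{equation*}
with constant independent of the spatial variable; an application of this for every eigenmode and summing then yields the full statement.

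The main work lies in this one-dimensional estimate. I would expand $G$ in its time Fourier transform, write
\begin{equation*}
\int_0^t G(\tau)\,d\tau = \frac{1}{2\pi}\int_{\mathbb{R}} \hat G(\tau)\,\frac{e^{it\tau}-1}{i\tau}\,d\tau,
\end{equation*}
and split the $\tau$ integral into $|\tau|\le 1/T$ and $|\tau| > 1/T$. On the low-frequency piece, I Taylor-expand $(e^{it\tau}-1)/(i\tau) = \sum_{k\ge 0} (it)^{k+1}\tau^k/(k+1)!$, so that after multiplying by $\psi(t/T)$ each term has its time regularity controlled by powers of $T$ and the $\tau$-factors are absorbed into the weight $\lr{\tau}^{-b'}$; the restriction $b' < 1/2$ guarantees that the resulting series converges and produces the power $T^{1-b-b'}$. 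On the high-frequency piece I split $\psi(t/T)(e^{it\tau}-1)/(i\tau)$ as $\psi(t/T)e^{it\tau}/(i\tau) - \psi(t/T)/(i\tau)$ and estimate each piece separately in $H^b_t$: the first is a time-frequency shifted copy of $\widehat{\psi(\cdot/T)}$ whose $H^b$ norm is $\lesssim T^{1/2-b}\lr{\tau}^{-1}$, and the second is $\tau$-independent with the same bound. Cauchy--Schwarz in $\tau$ against the weight $\lr{\tau}^{-b'}$, together with the constraint $b < 1-b'$, closes the estimate with gain $T^{1-b-b'}$.

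The step I expect to be the main obstacle is the high-frequency Fourier analysis, since $1/\tau$ is borderline non-integrable and the condition $b'<1/2$ is used in a sharp way: without the cancellation between the two pieces of $(e^{it\tau}-1)/(i\tau)$ after the time cutoff one cannot recover the correct $T$-power. Once this one-dimensional bound is in place, lifting it back to $M$ via the eigenfunction expansion and summing in $k$ with the weight $\lr{\mu_k}^{2s}$ yields the assertion for general $s$.
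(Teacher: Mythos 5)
The paper does not actually prove this lemma; it simply cites Bourgain and Ginibre, so there is no in-paper proof to compare against. Your overall strategy (conjugate by $e^{-it\Delta}$, reduce to a 1D estimate, Fourier-expand the Duhamel kernel, split at $|\tau|\sim 1/T$) is one of the standard routes, but there is a genuine gap in the high-frequency step.

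You claim $\|\psi(t/T)e^{it\tau}/(i\tau)\|_{H^b_t}\lesssim T^{1/2-b}\lr{\tau}^{-1}$, on the grounds that $\psi(t/T)e^{it\tau}$ is ``a time-frequency shifted copy of $\widehat{\psi(\cdot/T)}$.'' But frequency translation does not preserve the $H^b$ norm, since the weight $\lr{\sigma}^b$ is not translation invariant. Indeed, the $t$-Fourier transform of $\psi(t/T)e^{it\tau}$ is $T\hat\psi(T(\sigma-\tau))$, an $L^2$-normalized bump of width $\sim 1/T$ centered at $\sigma\approx\tau$, and on that bump $\lr{\sigma}^b\approx\lr{\tau}^b$ once $|\tau|\gtrsim 1/T$. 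Thus the correct bound is $\|\psi(t/T)e^{it\tau}\|_{H^b}\approx T^{1/2}\lr{\tau}^b$, so the first piece satisfies $\|\psi(t/T)e^{it\tau}/(i\tau)\|_{H^b}\approx T^{1/2}\lr{\tau}^{b-1}$, which is larger than your bound by the factor $(T\lr{\tau})^b\geq 1$. Feeding the corrected bound into your Minkowski--Cauchy--Schwarz step, you would need $\int_{|\tau|>1/T}\lr{\tau}^{2(b+b'-1)}\,d\tau<\infty$, i.e.\ $b+b'<\tfrac12$ — strictly stronger than the hypothesis $b+b'<1$. The mode-by-mode Minkowski estimate is simply too lossy here because it ignores the near-orthogonality of the waves $e^{it\tau}$ once multiplied by $\psi(t/T)$. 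The fix is to compute the $t$-Fourier transform of the entire high-frequency integral, $\hat F_1(\sigma)\sim\int_{|\tau|>1/T}\lr{\tau}^{-1}\hat G(\tau)\,T\hat\psi(T(\sigma-\tau))\,d\tau$, and run a Schur test on the kernel $\lr{\sigma}^b\lr{\tau}^{b'-1}T\hat\psi(T(\sigma-\tau))\mathbb{1}_{|\tau|>1/T}$, which indeed has row and column $L^1$ bounds $\lesssim T^{1-b-b'}$ under exactly $b+b'<1$, $b'<\tfrac12$. (An alternative clean route is to first prove the $T=1$ version and then extract the $T$-power by the standard inequality $\|u\|_{X^{0,\beta'}_T}\lesssim T^{\beta-\beta'}\|u\|_{X^{0,\beta}_T}$ for $0\leq\beta'\leq\beta<\tfrac12$, applied on the source side.)

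Your low-frequency analysis via Taylor expansion and your treatment of the $\tau$-independent piece $\psi(t/T)/(i\tau)$ are both sound. The reduction to $s=0$ and the conjugation by $e^{-it\Delta}$ are also fine.
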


\begin{lem}\label{L:localexistencethree}
For $s>s_0$, there exists $(b,b')\in\mathbb{R}^2$, satisfying
\begin{equation}0<b'<\frac{1}{2}<b\;\;,\;\;b+b'<1,\end{equation} and $C>0$ such that for
every triple $(u_j),j=1,2,3$ in $X^{s,b}(\mathbb{R}\times M)$
\begin{equation}\label{E:nonlinearBourgain}
\|u_1u_2u_3\|_{X^{s,-b'}(\mathbb{R}\times M)}\leq C \prod_{j=1}^3\|u_j\|_{X^{s,b}
(\mathbb{R}\times M)}.\end{equation}
\end{lem}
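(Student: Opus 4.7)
The plan is to prove Lemma 5.5 via the standard Bourgain-style argument: combine duality in $X^{s,-b'}$ with a Littlewood--Paley decomposition and invoke the bilinear Strichartz estimate in Bourgain-space form (Lemma 5.3) on two pairs of factors. First, I would use duality: since $X^{s,-b'}$ is dual to $X^{-s,b'}$ under the natural $(t,x)$-pairing, it suffices to control the quadrilinear form
$$I \;=\; \Bigl|\iint_{\mathbb{R}\times M} u_1\,u_2\,u_3\,\overline{u_0}\,dx\,dt\Bigr|$$
by $C\,\prod_{j=1}^{3}\|u_j\|_{X^{s,b}}\,\|u_0\|_{X^{-s,b'}}$ for an arbitrary test function $u_0 \in X^{-s,b'}$. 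Next, after a Littlewood--Paley decomposition $u_j = \sum_{N_j} u_{j,N_j}$ and $u_0 = \sum_{N_0} u_{0,N_0}$ with each piece spectrally localized to $N_j \leq \sqrt{-\Delta} \leq 2N_j$, the problem reduces to estimating each dyadic contribution $I_{N_0,N_1,N_2,N_3}$ and summing.

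Second, by the symmetry of the three factors, I would assume $N_1 \geq N_2 \geq N_3$. The spectral support of $u_{1,N_1}u_{2,N_2}u_{3,N_3}$ forces $N_0 \lesssim N_1$, so effectively one sums over the triples $(N_0, N_2, N_3)$ satisfying this constraint. I would then apply Cauchy--Schwarz by pairing the highest frequency with the lowest and the middle with the test function,
$$I_{N_0,N_1,N_2,N_3} \;\leq\; \|u_{1,N_1}u_{3,N_3}\|_{L^2_{t,x}}\,\|u_{2,N_2}u_{0,N_0}\|_{L^2_{t,x}},$$
and invoke Lemma 5.3 (with $b>\tfrac12$ fixed) on each pair, picking up factors $N_3^{s_0}$ and $\min(N_2,N_0)^{s_0}$, respectively, for any $s_0 > \tfrac23$ arbitrarily close to $\tfrac23$.

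Third, I would pull out the spectral weights, writing $\|u_{j,N_j}\|_{X^{0,b}} = N_j^{-s}\|u_{j,N_j}\|_{X^{s,b}}$ and $\|u_{0,N_0}\|_{X^{0,b'}} = N_0^{s}\|u_{0,N_0}\|_{X^{-s,b'}}$, and then check that the leftover dyadic factor $N_1^{-s} N_2^{-s} N_3^{s_0 - s} \min(N_2, N_0)^{s_0} N_0^{s}$, summed over $(N_0, N_1, N_2, N_3)$ with $N_0 \lesssim N_1 \geq N_2 \geq N_3$, yields a geometric series; this is the step that consumes the margin $s - s_0 > 0$. A final Cauchy--Schwarz in $\ell^2$ over the free dyadic indices absorbs the $\ell^2$-summable sequences $(\|u_{j,N_j}\|_{X^{s,b}})_{N_j}$ and $(\|u_{0,N_0}\|_{X^{-s,b'}})_{N_0}$, closing the estimate.

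The principal obstacle is the dyadic summation in the unbalanced-frequency regime $N_1 \gg N_2$, where $N_0$ is essentially pinned to $N_1$ and one must rely on the gain $\min(N_2,N_0)^{s_0} = N_2^{s_0}$ to compensate the high-frequency weight $N_0^s \sim N_1^s$; it is precisely here that the constraint $s > \tfrac{2}{3}$ becomes sharp. A secondary technical issue is reconciling the requirement $b > \tfrac12$ in Lemma 5.3 with the dual exponent $b' < \tfrac12$ needed in the duality pairing above. I expect this to be handled by choosing $(b, b')$ with $b + b' < 1$ but both close to $\tfrac12$, and absorbing the resulting polynomial losses of size $N^\varepsilon$ in each dyadic index into the small slack $s - \tfrac23 > 0$, exactly as in the analogous trilinear estimates in Burq--G\'erard--Tzvetkov~\cite{BGT05}.
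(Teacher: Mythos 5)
Your outline follows the paper's general shape (duality, Littlewood--Paley, apply the bilinear estimate in Bourgain form, sum dyadically), but it contains a genuine gap in the step where you write ``The spectral support of $u_{1,N_1}u_{2,N_2}u_{3,N_3}$ forces $N_0 \lesssim N_1$.'' On a general compact manifold (and a fortiori with boundary) there is no clean orthogonality for products of eigenfunctions, so the dual frequency $N_0$ can be arbitrarily larger than $N_1$ with nonzero contribution. Your leftover dyadic weight $N_1^{-s}N_2^{s_0-s}N_3^{s_0-s}N_0^{s}$ then diverges in $N_0$ in the regime $N_0 \gg N_1$, and the argument does not close. This is precisely the point where the paper brings in the \emph{second} estimate of Lemma~\ref{L:twoestimates}, namely the bound with the extra gain $(N_1/N_0)^2$; this gain is produced by writing $u_{0,N_0}$ as $-\Delta/N_0^2$ applied to a comparable function, integrating by parts via Green's formula (the boundary condition kills the boundary terms), and using Leibniz to distribute derivatives onto the other three factors. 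Handling the worst term, where a gradient lands on $u_{1,N_1}$ and another on $u_{2,N_2}$, is exactly where the gradient bilinear Strichartz estimate (Theorem~\ref{T:bilineartwo}, reformulated as~\eqref{E:bilinearBourgaintwo}) is needed; your proposal never invokes it, so one of the two main ingredients of the paper is missing.

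Regarding the tension between $b>\tfrac12$ in Lemma~5.3 and $b'<\tfrac12$ in the duality pairing: your expectation is correct in spirit, but what actually happens is not absorbing $N^\varepsilon$ losses. Rather, the paper interpolates the bilinear-Strichartz bound (which is available for $b_0>\tfrac12$ and gives $(N_2N_3)^{s_0}$) against a crude bound that holds for $b=\tfrac14$ (obtained from $\|u_{N_j}\|_{L^\infty(M)} \lesssim N_j^{1+\varepsilon}\|u_{N_j}\|_{L^2(M)}$ and the embedding $\|f\|_{L^4_tL^2_x}\lesssim\|f\|_{X^{0,1/4}}$). Interpolation between $(1+\varepsilon,\tfrac14)$ and $(s_0,b_0)$ then produces a valid estimate $(s',b_1)$ with $b_1<\tfrac12$ and $s'>s_0$ as close to $s_0$ as desired, after which one further decomposes in modulation $K_j$ and sums. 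You should flesh this out rather than defer to an unspecified absorption.
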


Lemma~\ref{L:localexistenceone} is easy to see.
\begin{proof}
Let $\varepsilon>0$ and $\varphi\in C^{\infty}_{0}(\mathbb{R})\;,\;\varphi=1 $ on $(-T-\varepsilon,T+\varepsilon)$.
Then $\|S(t)u_0\|_{X^{s,b}_T}\leq \|\varphi(t)S(t)u_0\|_{X^{s,b}}\leq\|\varphi(t)u_0\|_{H^b(\mathbb{R},H^s(M))}
\leq cT^{\frac{1}{2}-b}\|u_0\|_{H^s(M)}$.\\
\end{proof}
The lemma~\ref{L:localexistencetwo} is due to Bourgain~\cite{Bou93}, we also refer to Ginibre~\cite{Gin96} for a simpler proof.

The proof of lemma~\ref{L:localexistencethree} will rely on the bilinear estimates~\eqref{E:bilinearBourgain}
and~\eqref{E:bilinearBourgaintwo}. However we will postpone this proof
and see how can we proof theorem~\ref{T:well-posed} by these there lemmas first.

\begin{proof}(of Theorem~\ref{T:well-posed}) To solve NLS equation is equivalent to solve the integral
equation with Dirichlet (or Neumann) boundary conditions
$$u(t)=S(t)u_0-i\alpha\int_0^t S(t-\tau)\{|u(\tau)|^2u(\tau)\}d\tau$$
We denote by $\Phi(u)$ by the left hand side of the equation.

Consider $(b,b')\in\mathbb{R}^2$ given by lemma~\ref{L:localexistencetwo} and let $R>0$ and $u_0\in H^s(M)$
such that $\|u_0\|_{H^s}\leq R$. We show that there exists $R'>0$ and $0<T<1$ depending on $R$ such that
 $\Phi$ is a contracting map from the ball $B(0,R')\subset X^{s,b}_T(M)$ onto itself.

From the linear estimate~\eqref{E:linearBourgain} we know that
$\|S(t)u_0\|_{X^{s,b}_1(M)}\leq c\|u_0\|_{H^s}$. From the
definition of $X^{s,b}_T$ spaces we know that $T_1<T_2$ implies
$X^{s,b}_{T_2}\subset X^{s,b}_{T_1}$.
Therefore for $T<1$, $\|S(t)u_0\|_{X^{s,b}_T(M)}\leq c_0\|u_0\|_{H^s}$.

Define $R'=2c_0R$. From estimates~\eqref{E:nohomegenousBourgain} , we obtain for $T<1$,
$$\|\Phi(u)\|_{X^{s,b}_T(M)}\leq c_0\|u_0\|_{H^s}+c_1T^{1-b-b'}\|u\overline{u}u\|_{X^{s,-b'}_T(M)}$$
Combine this with~\eqref{E:nonlinearBourgain} gives
$$\|\Phi(u)\|_{X^{s,b}_T(M)}\leq c_0\|u_0\|_{H^s}+c_2T^{1-b-b'}\|u\|^3_{X^{s,b}_T(M)} .$$

Taking $T<1$ such
that $T^{1-b-b'}c_2R'^3\leq c_0 R$, we ensure $\Phi:B(0,R')\subset X^{s,b}_T\rightarrow B(0,R')\subset X^{s,b}_T.$
In addition $\Phi$ is a contraction, let $u_1,u_2\in B(0,R')\subset X^{s,b}_T$, then
$$\|\Phi(u_1)-\Phi(u_2)\|_{X^{s,b}_T(M)}\leq c_2T^{1-b-b'}\||u_1|^2u_2-|u_2|^2u_1\|_{X^{s,b}_T(M)}.$$

Using the decomposition
$|u_1|^2u_1-|u_2|^2u_2=u_1^2(\overline{u}_1-\overline{u}_2)+\overline{u}_2(u_1-u_2)(u_1+u_2)$
,~\eqref{E:nohomegenousBourgain} and~\eqref{E:nonlinearBourgain} , we get
$$\|\Phi(u_1)-\Phi(u_2)\|_{X^{s,b}_T(M)}\leq c_3T^{1-b-b'}R'^2\|u_1-u_2\|_{X^{s,b}_T(M)}.$$
 By choosing $T<1$
sufficient small , we know $\Phi$ is a contraction. Thus there exists an uniqueness $u\in X^{s,b}_T(M)$ such
that $\Phi(u)=u$. Since $b>\frac{1}{2}\;,\;u\in C((-T,T),H^s(M))$.
The flow $u_0\in B(0,R)\subset H^s(M)\rightarrow u\in X^{s,b}_T(M)$ is Lipschitz. For if $u\;,\;v$ are two
solutions with initial data $u_0\;,\;v_0$, we have as above
$$\|u-v\|_{X^{s,b}_T}\leq c\|u_0-v_0\|_{H^s}+c_3T^{1-b-b'}R'^2\|u-v\|_{X^{s,b}_T}.$$

 By choosing $T$ small enough , we have
$$\|u-v\|_{X^{s,b}_T}\leq c\|u_0-v_0\|_{H^s}$$

\end{proof}

\subsection{Nonlinear Analysis}

Now we only owe to prove Lemma~\ref{L:localexistencethree}. We will use a decomposition of
the spectrum of functions $u_j\in X^{s,b}
(\mathbb{R}\times M)$.

The duality argument leads to the following equivalence: $u\in X^{s,b}(\mathbb{R}\times M)\;,\;\Leftrightarrow$
 for all $u_0\in X^{\infty,\infty}(\mathbb{R}\times M)=\cap_{s>0,b\in\mathbb{R}}X^{s,b}(\mathbb{R}\times M)$
 we have
$$|<u,u_0>|\leq c\|u_0\|_{X^{-s,-b}(\mathbb{R}\times M)}$$
 where $<,>$ denote the bracket
 pairing $\mathcal{S}'$ and $\mathcal{S}$. Thus~\eqref{E:nonlinearBourgain} is implied by
 \begin{equation}
|\int_{\mathbb{R}}\int_{M}u_0u_1u_2u_3dxdt|\leq
c\prod_{j=1}^3\|u_j\|_{X^{s,b}(\mathbb{R}\times
M)}\|u_0\|_{X^{-s,b'(\mathbb{R}\times M)}}
 \end{equation} holding for all $u_0\in X^{\infty,\infty}(\mathbb{R}\times M)$. We will prove a similar result
 for spectrally localized functions and then sum over all frequencies.

 For $j\in\{0,1,2,3\}$ and $N_j\in 2^{\mathbb{N}}$. We denote by $u_{jN_j}=1_{\sqrt{-\triangle}\in[N_j,2N_j]}u_j$.
 Using the definition of $X^{s,b}(\mathbb{R}\times M)$ spaces the following equivalence holds
 \begin{equation}\label{E:Bourgainequivalence}
 \|u_j\|^2_{X^{s,b}(\mathbb{R}\times M)}\cong\sum_{N_j\in 2^{\mathbb{N}}}\|u_{jN_j}\|
 ^2_{X^{s,b}(\mathbb{R}\times M)}
 \cong \sum_{N_j\in 2^{\mathbb{N}}} N^{2s}_j\|u_{jN_j}\|^2_{X^{0,b}(\mathbb{R}\times M)}.\end{equation}
 We denote by
 $\underline{N}=(N_0,N_1,N_2,N_3)$ the quadruple of $2^{n}$ numbers, $n\in\mathbb{N}$. Also
$$I(\underline{N})=\int_{\mathbb{R}\times M}\prod_{i=0}^3u_{jN_j}dxdt$$

 In order to prove Lemma~\ref{L:localexistencethree}. We need the two
 estimates about $I(\underline{N})$ in the following lemma. The proof of first estimate is
 standard by using~\eqref{E:bilinearBourgain} , while the second estimate in this lemma
 with Dirichlet boundary condition was proved
 by Anton~\cite{Ant07} using~\eqref{E:bilinearBourgaintwo}.
 The same argument works for either Dirichlet or Neumann condition.  For the completeness
 and benefit of readers to understand how the bilinear estimates and gradient bilinear estimates
 working in nonlinear analysis, we include its proof here .

  We also need the fact that
 \begin{equation}\label{E:imbeddingBourgain}
  \|f\|_{L^4(\mathbb{R},L^2(M))}\leq\|f\|_{X^{0,\frac{1}{4}}(\mathbb{R}\times M)}.
 \end{equation} This is due to conservation of $L^2$ norm by the linear Schr\"{o}dinger
 flow and Sobolev embedding $H^{\frac{1}{4}}(\mathbb{R})\hookrightarrow
 L^4(\mathbb{R})$, thus

$$\|f\|_{L^4(\mathbb{R},L^2(M))}=\|e^{it\triangle}f\|_{L^4(\mathbb{R},L^2(M))}\leq\|e^{it\triangle}f\|_
 {H^{\frac{1}{4}}(\mathbb{R}\times L^2(M))}=\|f\|_{X^{0,\frac{1}{4}}(\mathbb{R}\times M)}.$$

\begin{lem}\label{L:twoestimates}
If~\eqref{E:bilinearStrioneone} and~\eqref{E:bilinearStritwotwo} hold for $s>s_0$, then for all
$s'>s_0$ there exists $0<b'<\frac{1}{2}\;,c>0$ such that,
assuming $N_3\leq N_2\leq N_1$, the following estimates hold:
\begin{equation}\label{E:nonlineargoalone}
|I(\underline{N})|\leq
c(N_2N_3)^{s'}\prod_{j=0}^3\|u_{jN_j}\|_{X^{0,b'}(\mathbb{R}\times
M)}
\end{equation}
\begin{equation}\label{E:nonlineargoaltwo}
|I(\underline{N})|\leq
c(\frac{N_1}{N_0})^2(N_2N_3)^{s'}\prod_{j=0}^3\|u_{jN_j}\|_{X^{0,b'}(\mathbb{R}\times
M)}
\end{equation}
\end{lem}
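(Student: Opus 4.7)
The plan is to bound $I(\underline{N})$ by applying Cauchy--Schwarz in $L^2_{t,x}$ to split the four-fold integral into a product of two $L^2$ norms of pairs, and then invoking the bilinear Strichartz estimate \eqref{E:bilinearBourgain} or its gradient version \eqref{E:bilinearBourgaintwo} on each pair. For \eqref{E:nonlineargoalone} no derivatives are required; for \eqref{E:nonlineargoaltwo} the extra factor $(N_1/N_0)^2$ will be gained by integrating by parts twice against $-\Delta$, which is profitable precisely when $N_0\gg N_1$.

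For \eqref{E:nonlineargoalone} I would pair the factors as $(u_{0N_0},u_{2N_2})$ and $(u_{1N_1},u_{3N_3})$, so that
\begin{equation*}
|I(\underline{N})|\leq\|u_{0N_0}u_{2N_2}\|_{L^2(\mathbb{R}\times M)}\,\|u_{1N_1}u_{3N_3}\|_{L^2(\mathbb{R}\times M)}.
\end{equation*}
Applying \eqref{E:bilinearBourgain} to each factor contributes $\min(N_0,N_2)^{s'}\leq N_2^{s'}$ and $\min(N_1,N_3)^{s'}=N_3^{s'}$ respectively, which yields the required $(N_2N_3)^{s'}$. The passage from the threshold $b>\tfrac{1}{2}$ in \eqref{E:bilinearBourgain} down to the needed $b'<\tfrac{1}{2}$ is carried out in the standard way: dyadically decompose each $u_{jN_j}$ according to modulation $L_j\sim\langle\tau+\mu_k^2\rangle$, apply \eqref{E:bilinearBourgain} on each block (for which $L_j\lesssim N_j^2$), and sum, absorbing the $L_j^{1/2-b'}$ losses against the room $s'-s_0>0$.

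For \eqref{E:nonlineargoaltwo} I would use the spectral calculus on the dyadic block at $N_0$ to write $u_{0N_0}=(-\Delta)w$ with $w:=(-\Delta)^{-1}_{N_0}u_{0N_0}$ satisfying $\|w\|_{X^{0,b'}}\lesssim N_0^{-2}\|u_{0N_0}\|_{X^{0,b'}}$. Since $w$ and each $u_{jN_j}$ satisfy the Dirichlet (resp.\ Neumann) condition, two applications of Green's formula give
\begin{equation*}
I(\underline{N})=\int w\,(-\Delta)(u_{1N_1}u_{2N_2}u_{3N_3})\,dx\,dt.
\end{equation*}
Expanding the Laplacian by the Riemannian product rule produces three pure terms $(-\Delta u_{jN_j})\prod_{i\neq j}u_{iN_i}$ and three cross terms $(\nabla u_{iN_i}\cdot\nabla u_{jN_j})u_{kN_k}$. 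For the dominant pure term $(j=1)$ I write $-\Delta u_{1N_1}=N_1^2\,\widetilde u_{1N_1}$ with $\widetilde u_{1N_1}$ spectrally localized at $N_1$ and of comparable $X^{0,b'}$-norm, pair $(w\cdot u_{2N_2})$ with $(\widetilde u_{1N_1}\cdot u_{3N_3})$, and apply \eqref{E:bilinearBourgain} to both, getting a bound
\begin{equation*}
\lesssim N_0^{-2}N_1^2\min(N_0,N_2)^{s'}N_3^{s'}\prod_j\|u_{jN_j}\|_{X^{0,b'}}\leq(N_1/N_0)^2(N_2N_3)^{s'}\prod_j\|u_{jN_j}\|_{X^{0,b'}}.
\end{equation*}
The pure $j=2,3$ terms are strictly smaller since $N_2^2,N_3^2\leq N_1^2$, and each cross term is handled by the gradient bilinear estimate \eqref{E:bilinearBourgaintwo}, pairing the four factors so that the $\Lambda$-weight falls on the larger gradient frequency while the $\min$-weight lands on $N_2$ or $N_3$.

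The main obstacle is \eqref{E:nonlineargoaltwo}: one must verify that all six terms from the product rule are dominated by $(N_1/N_0)^2(N_2N_3)^{s'}$, and for each cross term $(\nabla u_{iN_i}\cdot\nabla u_{jN_j})u_{kN_k}$ the correct pairing of the four factors must be chosen so that the $\Lambda$-weight in \eqref{E:bilinearBourgaintwo} remains at most $N_1$ and the $\min$-weight at most $N_2$; a short case analysis using the ordering $N_3\leq N_2\leq N_1$ and the fact that $s'<1$ delivers this. The $b>\tfrac{1}{2}$ to $b'<\tfrac{1}{2}$ downgrade via modulation decomposition, common to both estimates, is routine but consumes part of the margin $s'-s_0$.
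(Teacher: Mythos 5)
Your overall architecture matches the paper: pair the four factors via Cauchy--Schwarz and invoke the bilinear estimates for \eqref{E:nonlineargoalone}; for \eqref{E:nonlineargoaltwo}, use Green's formula to transfer the Laplacian from the $N_0$ block onto the product, split via Leibniz, put each gradient of a cross term into a separate pair, and apply the gradient bilinear estimate. That much is right, and your pairings and weight bookkeeping for both the pure and the cross terms agree with what the paper does.

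The genuine gap is in the step that lowers the exponent from $b>\tfrac12$ (the threshold in \eqref{E:bilinearBourgain}, \eqref{E:bilinearBourgaintwo}) to $b'<\tfrac12$, which is mandatory because the contraction argument in Lemma~\ref{L:localexistencetwo} requires $b+b'<1$. You claim one can ``dyadically decompose in modulation $L_j$, apply \eqref{E:bilinearBourgain} on each block (for which $L_j\lesssim N_j^2$), and absorb the $L_j^{1/2-b'}$ losses against $s'-s_0$.'' This does not close. After restricting to modulation $K_j$, the bilinear estimate with $b_0>\tfrac12$ yields a factor $\prod_j K_j^{\,b_0-b'}$ in front of $\prod_j\|u_{jN_jK_j}\|_{X^{0,b'}}$, and since $b_0>b'$ this exponent is \emph{positive}, so the sum over $K_j$ diverges: there is nothing to absorb it against, because the frequency margin $s'-s_0$ lives in $N$, not in $K$, and the two are independent. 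Moreover the parenthetical ``$L_j\lesssim N_j^2$'' is not a constraint the Bourgain decomposition gives you for free, and you say nothing about the complementary regime.

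The mechanism the paper uses, which you omit entirely, is to produce a \emph{second} estimate at a small $b$-index and interpolate. Concretely, one first derives the H\"older/Sobolev bound
\begin{equation*}
|I(\underline{N})|\leq c\,(N_2N_3)^{1+\varepsilon}\prod_{j=0}^3\|u_{jN_j}\|_{X^{0,1/4}(\mathbb{R}\times M)},
\end{equation*}
using the two-dimensional Sobolev embedding $\|u_{N_j}\|_{L^\infty(M)}\lesssim N_j^{1+\varepsilon}\|u_{N_j}\|_{L^2(M)}$ on the two small-frequency factors and the embedding \eqref{E:imbeddingBourgain} $\|f\|_{L^4(\mathbb{R};L^2(M))}\leq\|f\|_{X^{0,1/4}}$. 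Restricting to modulation blocks this gives $|I(\underline{N},\underline{K})|\leq c(N_2N_3)^{\alpha}(\prod K_j)^{\beta}\prod\|u_{jN_jK_j}\|_{L^2}$ at the two endpoints $(\alpha,\beta)=(1+\varepsilon,\tfrac14)$ and $(\alpha,\beta)=(s_0,b_0)$. Interpolating between them gives any $(s',b_1)$ with $s'>s_0$ and $b_1<\tfrac12$ (the margin $s'-s_0$ is consumed here, and only here). Choosing $b'\in(b_1,\tfrac12)$ makes $\prod_j K_j^{\,b_1-b'}$ a \emph{negative}-exponent weight, so the $\underline K$-sum converges by geometric series. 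This interpolation is the load-bearing step; without the low-$b$ Sobolev endpoint, the modulation sum in your argument does not converge. The same remark applies to your treatment of \eqref{E:nonlineargoaltwo}, which inherits the identical downgrade step after the $N_1N_2$ prefactor is extracted.
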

\begin{proof}
Use Holder inequality, we get
\begin{align}\label{E:nonlinearone}
|I(\underline{N})|&\leq\|u_{3N_3}\|_{L^4(L_x^{\infty})}\|u_{2N_2}\|_{L^4(L_x^{\infty})}
\|u_{1N_1}\|_{L^4(L^2_x)}\|u_{0N_0}\|_{L^4(L^2_x)}\notag\\
&\leq
c(N_2N_3)^{1+\varepsilon}\prod_{j=0}^3\|u_{jN_j}\|_{L^4(L_x^2)}\notag\\
&\leq c
(N_2N_3)^{1+\varepsilon}\prod_{j=0}^3\|u_{jN_j}\|_{X^{0,\frac{1}{4}}(\mathbb{R}\times
M)}
\end{align}
In the second inequality, we use Sobolev embedding $\|u_{N_j}\|_{L^{\infty}(M)}\leq c
N_j^{1+\varepsilon}\|u_{N_j}\|_{L^2(M)}$. The third inequality came from~\eqref{E:imbeddingBourgain} .

Use Cauchy inequality and~\eqref{E:bilinearBourgain} (which is implied by~\eqref{E:bilinearStrioneone} ),
we obtain that for any $b_0>\frac{1}{2}$ there exists
$c_0>0$ such that
\begin{align}\label{E:nonlineartwo}
|I(\underline{N})|&\leq\|u_{0N_0}u_{2N_2}\|_{L^2(\mathbb{R}\times
M)}\|u_{1N_1}u_{3N_3}\|_{L^2(\mathbb{R}\times M)}
\notag\\
&\leq
c_1(N_2N_3)^{s_0}\prod_{j=0}^3\|u_{jN_j}\|_{X^{0,b_0}(\mathbb{R}\times
M)}
\end{align}

We need further decomposition $u_{jN_j}=\sum_{K_j}u_{jN_jK_j}$ for
interpolation, where $u_{jN_jK_j}=1_{K_j\leq \langle
i\partial_t+\triangle \rangle\leq 2K_j}u_{jN_j}$ and the sum is
taken over $2^n$ numbers , for
$n\in\mathbb{N}:K_j\in 2^{\mathbb{N}}$. Let us denote $I(\underline{N},\underline{K})=
\int_{\mathbb{R}\times M}\prod_{j=0}^3u_{jN_jK_j}$. Estimates~\eqref{E:nonlinearone}
and~\eqref{E:nonlineartwo} give
$$|I(\underline{N},\underline{K})|\leq c(N_2N_3)^{\alpha}(\prod_{j=0}^3K_j)^{\beta}\prod_{j=0}^3
\|u_{jN_jK_j}\|_{L^2(\mathbb{R}\times M)}$$
where $(\alpha,\beta)$ equals $(1+\varepsilon,\frac{1}{4})$ or
$(s_0,b_0)$. For $s_0<s<1$ we can choose $\varepsilon>0\;,\;b_0>\frac{1}{2}$ and $0<b_1<\frac{1}{2}$ such
that by interpolation we have the same estimates for $(\alpha,\beta)=(s',b_1)$.

   Taking $b'\in(b_1,\frac{1}{2})$, this reads
$$|I(\underline{N},\underline{K})|\leq c(N_2N_3)^{s'}\prod_{j=0}^3K_j^{b_1-b'}\|u_{jN_jK_j}\|_{X^{0,b'}
(\mathbb{R}\times M)}.$$
Summing up over $\underline{K}\in
(2^{\mathbb{N}})^4$, by geometric series and using Cauchy
Schwartz, we obtain
$$|I(\underline{N})|\leq c(N_2N_3)^{s'}\prod_{j=0}^3\|u_{jN_j}\|_{X^{0,b'}(\mathbb{R}\times M)}$$
 which conclude the
proof of~\eqref{E:nonlineargoalone}.

For the proof of~\eqref{E:nonlineargoaltwo} , we start with Green formula:
$$\int_{M}\triangle fg-f\triangle gdx=\int_{\partial M}\frac{\partial f}{\partial \upsilon}g-f\frac{\partial g}
{\partial \upsilon}d\sigma $$

If ${e_k}$ are eigenfunctions of the Dicichlet(or Neumann)
Laplacian associated with eigenvalues $\lambda_k^2$. The
$u_{0N_0}=\sum_{\lambda_k\sim N_0}c_ke_k$, where
$c_k=(u_{0N_0},e_k)$. We write
$$u_{0N_0}=-\frac{\Delta}{N_0^2}\sum_{\lambda_k\sim N_0}c_k(\frac{N_0}{\lambda_k})^2e_k.$$
Define $Tu_{0N_0}=\sum_{\lambda_k\sim N_0}c_k(\frac{N_0}{\lambda_k})^2e_k$ and $Vu_{0N_0}=
\sum_{\lambda_k\sim N_0}c_k(\frac{\lambda_k}{N_0})^2e_k.$ Then we have $TVu_{0N_0}=VTu_{0N_0}=u_{0N_0}$ and
$\|Tu_{0N_0}\|_{H^s}\sim \|u_{0N_0}\|_{H^s}$ for all $s$. Use this notation
$u_{0N_0}=-\frac{\Delta}{(N_0)^2}Tu_{0N_0}.$
 Apply it to green formula and using $u_{jN_j}|_{\partial M}=0$
(or $N_x\cdot\nabla u|_{\partial M}=0$),
we obtain
$$I(\underline{N})=\frac{1}{N_0^2}\int_{\mathbb{R}\times M}Tu_0N_0\Delta(u_{1N_1}u_{2N_2}u_{3N_3})$$

By Leibniz's law, we have to deal with summation of terms of the forms
$$\frac{1}{N_0^2}J_{11}(\underline{N})=\frac{1}{N_0^2}\int_{\mathbb{R}\times M}Tu_{0N_0}(\Delta u_{1N_1})u_{2N_2}u_{3N_3}$$
and
$$\frac{1}{N_0^2}J_{12}(\underline{N})=\frac{1}{N_0^2}\int_{\mathbb{R}\times M}Tu_{0N_0}(\nabla u_{1N_1})
 (\nabla u_{2N_2})u_{3N_3}.$$
 As we will see soon, they are always the largest terms in each sum. Use
 $\triangle u_{2N_2}$ we get $J_{11}(\underline{N})=-N_1^2\int_{\mathbb{R}\times M}Tu_{0N_0}Vu_{1N_1}
 u_{2N_2}u_{3N_3}$. Thus by~\eqref{E:nonlineargoalone} and $\|u_{jN_j}\|_{H^s}\sim\|Tu_{jN_j}\|_{H^s}\sim\|Vu_{jN_j}\|_{H^s}$, we have
$$\frac{1}{N_0^2}|J_{11}(\underline{N})|\leq c \frac{N_1^2}{N_0^2}(N_2N_3)^{s'}\prod_{j=0}^3\|u_{jN_j}\|_
 {X^{0,b'}}(\mathbb{R}\times M).$$

To estimates $J_{12}(\underline{N})$, we note that $\|\nabla
u_{jN_j}\|_{L^2(M)}\leq cN_j\|u_{jN_j}\|_{L^2(M)}$. Use the same
process as in the proof of~\eqref{E:nonlineargoalone}  ,
then~\eqref{E:nonlinearone} and~\eqref{E:nonlineartwo} correspond to
$$|J_{12}(\underline{N})|\leq c(N_1N_2)(N_2N_3)^{1+\varepsilon}\prod_{j=0}^3\|u_{jN_j}\|_{X^{0,\frac{1}{4}}
(\mathbb{R})\times M}$$
 and
$$|J_{12}(\underline{N})|\leq c(N_1N_2)(N_2N_3)^{s_0}\prod_{j=0}^3\|u_{jN_j}\|_{X^{0,b_0}(\mathbb{R})\times M}.$$

In fact, we just got an additional term $N_1N_2$ in these new estimates.  Therefore the interpolation argument
leads to
$$\frac{1}{N_0^2}|J_{12}(\underline{N})|\leq c\frac{N_1N_2}{N_0^2}(N_2N_3)^{s'}\prod_{j=0}^3
\|u_{jN_j}\|_{X^{0,b'}}(\mathbb{R}\times M).$$
Since $N_1N_2\leq N_1^2$, we are done.
\end{proof}

Now we can use Lemma~\ref{L:twoestimates} to prove Lemma~\ref{L:localexistencethree} .
\begin{proof}(Proof of Lemma~\eqref{L:localexistencethree} )\\
Our goal is to prove~\eqref{E:nonlinearBourgain} . Use the same notation as above, we consider $I(\underline{N})=
\int_{\mathbb{R}\times M}\prod_{i=0}^3u_{jN_j}dxdt$. Without loss of generality, we may assume $N_3\leq N_2\leq N_1$.

Let $\frac{2}{3}<s'<s$. Using~\eqref{E:nonlineargoaltwo} in Lemma~\ref{L:twoestimates} and~\eqref{E:Bourgainequivalence} , we have
$$|\sum_{N_0<cN_1}I(\underline{N})|\leq c\sum_{N_0<cN_1}(N_2N_3)^{s'-s}(\frac{N_0}{N_1})^s\|u_{0N_0}\|_{X^{-s,b'}
\mathbb{R}\times M}\prod_{j=1}^3\|u_{jN_j}\|_{X^{s,b'}(\mathbb{R}\times M)}. $$

Using Cauchy Schwartz inequality and~\eqref{E:Bourgainequivalence}, we have
 $$|\sum_{N_0<cN_1}I(\underline{N})|\leq c\|u_2\|_{X^{s,b'}(\mathbb{R}\times M)}
 \|u_3\|_{X^{s,b'}(\mathbb{R}\times M)}\sum_{N_0\leq CN_1}(\frac{N_0}{N_1})^s\alpha(N_0)\beta(N_1).$$
 where $\alpha(N_0)=\|u_{0N_0}\|_{X^{-s,b'}(\mathbb{R}\times M)}$ and $\beta(N_1)=\|u_{1N_1}\|_{X^{s,b'}
 (\mathbb{R}\times M)}$. Thus we have
$$\sum_{N_0}\alpha(N_0)^2\cong \|u_0\|^2_{X^{-s,b'}}\;,\;\sum_{N_1}\beta(N_1)^2\cong \|u_1\|^2_{X^{s,b'}}.$$

 Since $N_0\;,\;N_1$ are both dyadic numbers, we write $N_1=2^lN_0$ and $N_0\geq N(l)={\rm max}(1,2^{-l})$, where $l$
 is an integer, $l\geq -l_0$ for some $l_0\in\mathbb{N}$ depending on $c$. Thus
 \begin{align}
\sum_{N_0<cN_1}&(\frac{N_0}{N_1})^s\alpha(N_0)\beta(N_1)=\sum_{l\geq-l_0}\sum_{N_0\geq N(l)}2^{-sl}
\alpha(N_0)\beta(2^lN_0)\notag\\
&\leq
\sum_{l>-l_0}2^{-sl}(\sum_{N_0}\alpha(N_0)^2)^{\frac{1}{2}}(\sum_{N_0>N(l)}\beta(2^lN_0)^2)^{\frac{1}{2}}\notag\\
&\leq c\|u_0\|_{X^{-s,b'}(\mathbb{R}\times
M)}\|u_1\|_{X^{s,b'}(\mathbb{R}\times M)}\notag
 \end{align} Since $\|u\|_{X^{s,b'}}\leq\|u\|_{X^{s,b}}$ for $b'<b$, we
 conclude that
$$|\sum_{N_0<cN_1}I(\underline{N})|\leq c\|u_0\|_{X^{-s,b'}}\prod_{j=1}^3\|u_j\|^{X^{s,b}}.$$

For $N_0\geq cN_1$, we use~\eqref{E:nonlineargoaltwo} of Lemma~\ref{L:twoestimates} to get:
$$|\sum_{N_0\geq cN_1}I(\underline{N})|\leq c\sum_{N_0\geq cN_1}(N_2N_3)^{s'-s}(\frac{N_1}{N_0})^{2-s}\|u_{0N_0}\|_{X^{-s,b'}
\mathbb{R}\times
M}\prod_{j=1}^3\|u_{jN_j}\|_{X^{s,b'}(\mathbb{R}\times M)}.$$ This is just an exchange the role of $N_0$ and $N_1$
 in the previous argument. Thus we obtain again
$$|\sum_{N_0\geq cN_1}I(\underline{N})|\leq c \|u_0\|_{X^{-s,b'}(\mathbb{R}\times M)}\|u_1\|_{X^{s,b'}
 (\mathbb{R}\times M)}\|u_2\|_{X^{s,b'}(\mathbb{R}\times M)}\|u_3\|_{X^{s,b'}(\mathbb{R}\times M)}$$

\end{proof}


\begin{thebibliography}{99}
\bibitem{Ali07} S.Alinhac and P.P.G\`{e}rard, \emph{Pseudo-differential operators and the Nash-Moser
Theorem}, GTM 82, 2007 AMS.
\bibitem{Ant07} R.Anton,\emph{Global existence for defocusing cubic NLS and Gross-Pitaevskii equation in three
dimensional exterior domains}, J. Math. Pures Appl. (9){\bf 89} (2008)no. 4, 335-354
\bibitem{Ant06} R.Anton, \emph{Cubic nonlinear Schr\"{o}dinger equation on three dimensional balls with radial data
}, Commun. Part. Diff. Eq. {\bf 33} (2008), 1862-1889 .
\bibitem{Ant05} R.Anton, \emph{Strichartz inequalities for Lipschitz metrics on manifold and the nonlinear S
chr\"{o}dinger equation on domains }, Bull. Soc. Math. France {\bf 136} (2008)no.1, 27-65
\bibitem{BSS07} M.D. Blair, H.F. Smith and C.D. Sogge ,\emph{On Strichartz estimates for Schr\"{o}dinger
operators in compact manifolds with boundary}, Proceedings of the
AMS, {\bf 136} (2008), 247-256.
\bibitem{Bo93} J.Bourgain, \emph{Fourier transform restriction phenomena for certain lattic subsets and
application to nonlinear evolution equations i. Schr\"{o}dinger
equation}, Geom. Funct. Anal. {\bf 3} (1993), 107-156
\bibitem{Bou93} J.Bourgain, \emph{Exponential sums and nonlinear Schr\"{o}dinger
equations}, Geom. Funct. Anal. {\bf 3} (1993), 157-178
\bibitem{Bou98} J.Bourgain, \emph{Refinements of Strichartz' inequality and applications to 2D-NLS with critical
nonlinearity}, Int.Math.Res.Not.{\bf 5} (1998) ,253-283
\bibitem{Bou99} J.Bourgain, \emph{Global solutions of nonlinear Schr\"{o}dinger equations}, Colloq. Publications,
AMS, 1999
\bibitem{BGT03} N.Burq, P.G\`{e}rard and N.Tzvetkov, \emph{Two singular dynamics of the nonlinear
Schr\"{o}dinger equation on a plane domain }, Geom. Funct. Anal., {\bf 13} {2003}, 1-19
\bibitem{BGT04} N.Burq, P.G\`{e}rard and N.Tzvetkov, \emph{Strichartz inequality and the nonlinear Schr\"{o}dinger
equations on compcat manifolds }, Amer. J. Math., {\bf 126} (2004), 569-605.
\bibitem{BGT05} N.Burq, P.G\`{e}rard and N.Tzvetkov, \emph{Bilinear eigenfunction estimates and the nonlinear S
chr\"{o}dinger equations on surfaces }. Invent. Math., {\bf 159} (2005), 187-223.
\bibitem{GT83} D.Gilbarg and N.Trudinger, \emph{Elliptic partial differential equations of second order}, 2nd.edition.
Springer, New York, 1983.
\bibitem{GiV84} J.Ginibre and G. Velo, \emph{On the global Cauchy problem for some nonlinear Schr\"{o}dinger
equations}, Ann.Inst.H.Poincar Anal. Non Lin\'{e}aire {\bf 1} (1984), no.4, 309-323
\bibitem{Gin96} J.Ginibre, \emph{Le probl\`{e}me de Cauchy pour des edp semi-lin\'{e}aires p\'{e}riodiques en
variable d'espace [d'apr\`{e}s Bourgain] }, S\'{e}minaire Bourbaki 1995 , Ast\'{e}risque {\bf 237} (1996), 163-187
\bibitem{KeT98} M.Keel and T.Tao, \emph{Endpoint Strichartz
Estimates}, Amer.J.Math. {\bf 120} (1998), 955-980
\bibitem{Kla96} S.Klainerman and M.Machedon, \emph{Remark on Strichartz-type inequalities. With appendices by
J.Bourgain and D.Tataru}, Int.Math.Res.Not.{\bf 5} (1996), 201-220
\bibitem{Shen05}Zhongwei Shen, \emph{Bounds of Riesz transforms on $L^p$ spaces for the second order
elliptic operators}, Ann. Inst. Fourier. Grenoble, {\bf 55} (2005), 173-197
\bibitem{Smi06} H.F. Smith, \emph{Spectral cluster estimates for $C^{1,1}$ metrics}, Amer. J. Math.
{\bf 128} (2006), 1069-1103.
\bibitem{SmS05} H.F. Smith and C.D.Sogge ,\emph{On the $L^p$ norm of spectral clusters for compcact manifolds with
boundary}, Acta Math., {\bf 189} (2007), 107-153.
\bibitem{Sog02} C.D.Sogge ,\emph{Eigenfunction and Bochner Riesz estimates on manifolds with boundary},
Math. Res. Lett., {\bf 9} (2002), 205-216.
\bibitem{Str77} R.Strichartz, \emph{Restriction of Fourier transform to quadratic surfaces and decay of solutions
to the wave equation}, Duke Math J. {\bf 44} (1977), no.3, 705-714
\bibitem{Tao01} T.Tao, \emph{Multilinear weighted convolutions of $L^2$ functions and applications to nonlinear
dispersive equations},  Amer.J.Math. {\bf 123} (2001), 839-908
\bibitem{Tat00} D.Tataru, \emph{Strichartz estimates for operators with nonsmooth coefficients and the nonlinear
wave equation }, Amer.J.Math. {\bf 122} (2000), no.2, 349-376
\bibitem{Tat04} D. Tataru, \emph{Phase space transform and microlocal analysis}, Phase space analysis of partial
differential equations. Vol.II 505-524, Pubbl. Cent. Ric. Mat.
Ennio Georgi, Scuola Norm. Sup.,Pisa, 2004
\bibitem{Tay91} M. Taylor, \emph{Pseudodifferential Operators and Nonlinear PDE}, Progress is Mathematics,
vol 100, Birkn auser, Boston, 1991.
\bibitem{Xu04} Xiangjin Xu, \emph{Eigenfunction estimates on compact manifolds with boundary and H\"{o}rmander
Multiplier theorem}, PhD thesis, Johns Hopkins University, 2004

\end{thebibliography}
\end{document}